\documentclass[11pt,reqno]{amsart}
\usepackage{hyperref}
\usepackage{amssymb,amsmath,amsfonts,latexsym}
\usepackage{bm}
\usepackage{mathtools}
\usepackage{enumerate}
\usepackage{enumitem}
\usepackage{mathrsfs}
\usepackage{array,graphics,color}
\usepackage{csquotes}
\usepackage[utf8]{inputenc}
\allowdisplaybreaks

\setlength{\textheight}{600pt} \setlength{\textwidth}{475pt}
\oddsidemargin -0mm \evensidemargin -0mm \topmargin -0pt

\numberwithin{equation}{section}
\newtheorem{dfn}{Definition}[section]
\newtheorem{thm}[dfn]{Theorem}
\newtheorem{lma}[dfn]{Lemma}

\newtheorem{ppsn}[dfn]{Proposition}
\newtheorem{crlre}[dfn]{Corollary}
\newtheorem{xmpl}[dfn]{Example}
\newtheorem{rmrk}[dfn]{Remark}

\newtheorem{note}[dfn]{Note}

\DeclarePairedDelimiterX{\norm}[1]{\lVert}{\rVert}{#1}
\DeclarePairedDelimiterX{\bnorm}[1]{\big\lVert}{\big\rVert}{#1}
\DeclarePairedDelimiterX{\Bnorm}[1]{\Big\lVert}{\Big\rVert}{#1}

\newcommand{\N}{\mathbb{N}}

\begin{document}
	%\today
	
	\title[On a generalized density point defined by families of sequences involving ideals]{On a generalized density point defined by families of sequences involving ideals}
	
	\author[Banerjee] {Amar Kumar Banerjee}
	\address{Department of Mathematics, The University of Burdwan, Burdwan-713104, West Bengal, India}
	\email{akbanerjee1971@gmail.com, akbanerjee@math.buruniv.ac.in}
	
	\author[Debnath] {Indrajit Debnath}
	\address{Department of Mathematics, The University of Burdwan, Burdwan-713104, West Bengal, India}
	\email{ind31math@gmail.com}

	\subjclass[2020]{40A35, 54C30, 26E99}
	
	\keywords{Density topology, ideal, $\mathcal{I}$-density topology}
	
	\begin{abstract}
		In this paper we have introduced the notion of $\mathcal{I}_{(s)}$-density point corresponding to the family of unbounded and $\mathcal{I}$-monotonic increasing positive real sequences, where $\mathcal{I}$ is the ideal of subsets of the set of natural numbers. We have studied the corresponding topology in the space of reals and have investigated several properties of this topology. Also we have formulated a weaker condition for the sequences so that the classical density topology coincides with $\mathcal{I}_{(s)}$-density topology.
	\end{abstract}
	\maketitle
	
	\section{Introduction}
	
	A series of important developments in density topology were evolved from the foundational result of Goffman et al. \cite{Goffman} to the most remarkable work of M. Filipczak and J. Hejduk in \cite{Filipczak 2004} where they defined the density point by families of sequences. Density topology were studied extensively in several spaces like the space of real numbers \cite{Riesz}, Euclidean $n$-space \cite{Troyer}, metric spaces \cite{Lahiri 1998} etc. In the recent past the notion of classical Lebesgue density point were generalized by weakening the assumptions on the sequences of intervals and consequently several notions like $\langle s \rangle$-density point by M. Filipczak and J. Hejduk \cite{Filipczak 2004}, $\mathcal{J}$-density point by J. Hejduk and R. Wiertelak  \cite{Hejduk 2014}, $\mathcal{S}$-density point by F. Strobin and R. Wiertelak \cite{Strobin} were obtained. A significant volume of work in this area were carried out by distinguished researchers in the last few decades \cite{das, Filipczak 2021, Hejduk 2018, Wojdowski}. In recent time Banerjee and Debnath have found a new way to generalize density topology using ideals in \cite{banerjee 4}. 
	
	The usual notion of convergence doesnot always capture the properties of vast class of non-convergent sequences in fine details. In order to include more sequences under purview the idea of convergence of real sequences was generalized to the notion of statistical convergence \cite{Fast,Schoenberg} followed by the idea of ideal convergence \cite{Kostyrko 2000}.

	$\langle s \rangle$-density topology \cite{Filipczak 2004} is the object of our interest and play a central role in our study. The prime objective of this paper is to investigate a generalized density point defined by families of sequences. In this paper we try to generalize the $\langle s \rangle$-density point by involving the notion of ideal $\mathcal{I}$ of subsets of naturals. We have given the notion of $\mathcal{I}_{(s)}$-density and induced $\mathcal{I}_{(s)}$-density topology in the space of reals. We have shown that $\mathcal{I}_{(s)}$-density point is dependent on the nature of the sequence $(s)$. Some natural properties of this topology have been studied. Also we have given a characterization of equality between this topology and classical density topology. 
	
	\section{Preliminaries}
	Let us recall the definition of asymptotic density. Here $\mathbb{N}$ stands for the set of natural numbers and for $K \subset \mathbb{N}$
	we denote $K(n)$ to be the set $\{k \in K : k \leq n\}$ and $|K(n)|$ is the cardinality of $K(n)$. The asymptotic density of $K$ is defined by  $d(K)=\lim_{n\rightarrow{\infty}}\frac{|K(n)|}{n}$, provided the limit exists. The notion of asymptotic density was used to define the idea of statistical convergence by Fast \cite{Fast}, generalizing the idea of usual convergence of real sequences. A sequence $\{x_n\}_{n \in \mathbb{N}}$ of real numbers is said to be statistically convergent to $x_0$ if for given any $\epsilon > 0$ the set $K(\epsilon) = \{k\in \mathbb{N} : |x_k - x_0| \geq \epsilon\}$ has asymptotic density zero.

	After this pioneering work, the theory of statistical convergence of real sequences were generalized to the idea of $\mathcal{I}$-convergence of real sequences by P. Kostyrko et al. \cite{Kostyrko 2000}, using the notion of ideal  $\mathcal{I}$ of subsets of $\mathbb{N}$, the set of natural numbers. We shall use the notation $2^{\mathbb{N}}$ to denote the power set of $\mathbb{N}$.
	
	\begin{dfn}\cite{Kostyrko 2000}
		A nonvoid class  $\mathcal{I} \subset 2^\mathbb{N}$ is called an ideal if $A,B \in  \mathcal{I}$ implies $A \cup B \in  \mathcal{I}$ and $A\in  \mathcal{I}, B\subset A$ imply $B\in \mathcal{I}$. Clearly $\{\phi\}$ and $2^{\mathbb{N}}$ are ideals of $\mathbb{N}$ which are called trivial ideals. An ideal is called non-trivial if it is not trivial.
	\end{dfn}

	It is easy to verify that the family $\mathcal{J}=\{A \subset \mathbb{N}: d(A)=0\}$ forms a non-trivial admissible ideal of subsets of $\mathbb{N}$. If $\mathcal{I}$ is a proper non-trivial ideal, then the family of sets $\{M\subset \mathbb{N} : \mathbb{N}\setminus M \in \mathcal{I}\}$ denoted by $\mathcal{F}(\mathcal{I})$ is a filter on $\mathbb{N}$ and it is called the filter associated with the ideal $\mathcal{I}$ of $\mathbb{N}$.

	\begin{dfn}\cite{Kostyrko 2000}
		A sequence $\{x_n\}_{n \in \mathbb{N}}$ of real numbers is said to be $\mathcal{I}$-convergent to $x_0$ if the set $K(\epsilon) = \{k\in \mathbb{N} : |x_k - x_0| \geq \epsilon\}$ belongs to $\mathcal{I}$ for any $\epsilon>0$.
	\end{dfn}
	
	Further many works were carried out in this direction by many authors \cite{banerjee 21,banerjee 31,Lahiri 2005}. Throughout the paper the ideal $\mathcal{I}$ will always stand for a nontrivial admissible ideal of subsets of $\mathbb{N}$.

	Now let us introduce the following notations which will serve our purpose. Throughout $\mathbb{R}$ stands for the set of all real numbers. We shall use the notation $\mathcal{L}$ for the $\sigma$-algebra of Lebesgue measurable sets on $\mathbb{R}$, $\lambda ^{\star}$ for the outer Lebesgue measure and $\lambda$ for the Lebesgue measure on $\mathbb{R}$ \cite{halmos}. Wherever we write $\mathbb{R}$ it means that $\mathbb{R}$ is equipped with natural topology unless otherwise stated. We shall use the notation $2^{\mathbb{R}}$ to denote the power set of $\mathbb{R}$. By \enquote{Euclidean $F_{\sigma}$ and Euclidean $G_{\delta}$ set} we mean $F_{\sigma}$ and $G_{\delta}$ set in $\mathbb{R}$ equipped with natural topology. The symmetric difference of two sets $A$ and $B$ is $(A  \setminus B)\cup (B \setminus A)$ and it is denoted by $A \triangle B$. The fact that $\lambda(A \triangle B)=0$ for any two sets $A, B \in \mathcal{L}$ will be denoted by $A \sim B$. By \enquote{a sequence of intervals $\{J_n\}_{n \in \mathbb{N}}$ about a point $p$} we mean $p \in \bigcap_{n \in \mathbb{N}}J_n$. The length of the interval $J_n$ will be denoted by $|J_n|$.
	
	\begin{dfn}\cite{Density topologies} For $E \in \mathcal{L}$ and a point $p \in \mathbb{R}$ we say the point $p$ is a classical density point of $E$ if and only if  $$\lim_{h \rightarrow{0+}} \frac{\lambda(E \cap [p-h,p+h])}{2h}=1.$$   
	\end{dfn}
	
	Equivalently we can say the point $p \in \mathbb{R}$ is a classical density point of $E$ if and only if $$\lim_{h \rightarrow{0+}} \frac{\lambda((\mathbb{R} \setminus E) \cap [p-h,p+h])}{2h}=0.$$
	
	The set of all classical density point of $E$ is denoted by $\Phi (E)$. The collection $$\mathcal{T}_d=\{E \in \mathcal{L}: E \subseteq \Phi(E) \}$$ is a topology in the real line \cite{Density topologies} and it is called as the classical density topology.
	
	\begin{thm}\cite{Oxtoby} \label{e5}
		For any Lebesgue measurable set $H \subset \mathbb{R}$,	$$\lambda(H \triangle \Phi(H))=0.$$
		
	\end{thm}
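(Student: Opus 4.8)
The plan is to write $H \triangle \Phi(H) = (H\setminus\Phi(H)) \cup (\Phi(H)\setminus H)$ and show that each of the two pieces is $\lambda^{\star}$-null; then $H\triangle\Phi(H)$ is null and, by completeness of Lebesgue measure, measurable of measure zero. The first inclusion is the classical Lebesgue density theorem (almost every point of a measurable set is a density point of it), and the second will be deduced from the first by passing to the complement.

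For $\lambda(H\setminus\Phi(H)) = 0$, first reduce to the case $H$ bounded by intersecting with $[-N,N]$ and taking a countable union (on $(-N,N)$ one has $\Phi(H)=\Phi(H\cap[-N,N])$, since for $p$ in this open interval all sufficiently short intervals about $p$ lie inside it). Suppose $\lambda^{\star}(H\setminus\Phi(H)) > 0$. A point $p \in H$ is not a density point of $H$ iff there exist $\alpha \in (0,1)$ and a sequence $h_k \downarrow 0$ with $\lambda(H\cap[p-h_k,p+h_k]) < 2\alpha h_k$; writing $H\setminus\Phi(H)$ as the countable union over $j\in\mathbb{N}$ of the sets of such $p$ for which one may take $\alpha = 1-\tfrac1j$, some such set $A$ (with a fixed $\alpha<1$) has positive outer measure, and $A$ is bounded. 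Fix an open $G \supseteq A$ with $\lambda(G)$ close to $\lambda^{\star}(A)$. The closed intervals $[p-h,p+h] \subseteq G$ with $p\in A$, $h$ small, and $\lambda(H\cap[p-h,p+h]) < 2\alpha h$ form a Vitali cover of $A$; by the Vitali covering theorem extract a countable pairwise-disjoint subfamily $\{I_n\}$ with $\lambda^{\star}\!\big(A\setminus\bigcup_n I_n\big) = 0$. Since $A\subseteq H$,
$$\lambda^{\star}(A) \le \sum_n \lambda^{\star}(A\cap I_n) \le \sum_n \lambda(H\cap I_n) < \alpha\sum_n |I_n| \le \alpha\,\lambda(G),$$
and letting $\lambda(G)\downarrow\lambda^{\star}(A)$ gives $\lambda^{\star}(A) \le \alpha\,\lambda^{\star}(A)$ with $\lambda^{\star}(A)$ finite and positive, a contradiction. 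Hence $\lambda(H\setminus\Phi(H)) = 0$.

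For $\lambda(\Phi(H)\setminus H) = 0$, apply the previous step to the measurable set $\mathbb{R}\setminus H$ to get $\lambda\big((\mathbb{R}\setminus H)\setminus\Phi(\mathbb{R}\setminus H)\big) = 0$. If $p \in \Phi(H)$ then, by the complementary formulation of a density point recorded just above the statement, $\lambda((\mathbb{R}\setminus H)\cap[p-h,p+h])/2h \to 0$ as $h\to 0+$, so $p\notin\Phi(\mathbb{R}\setminus H)$. Therefore $\Phi(H)\setminus H \subseteq (\mathbb{R}\setminus H)\setminus\Phi(\mathbb{R}\setminus H)$, which is null. Combining the two estimates yields $\lambda(H\triangle\Phi(H)) = 0$.

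The heart of the argument — and the only non-formal step — is the Vitali covering estimate in the second paragraph: setting up the Vitali cover by the ``bad'' intervals inside a tight open neighbourhood of $A$ and running the measure bookkeeping to reach the contradiction. Once the density theorem is available in the form $\lambda(H\setminus\Phi(H)) = 0$, the complementary inclusion and the assembly of the final identity are immediate.
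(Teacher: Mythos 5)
Your proof is correct. Note that the paper does not prove this statement at all: it is the classical Lebesgue Density Theorem, quoted with a citation to Oxtoby's \emph{Measure and Category}, so there is no in-paper argument to compare against. What you have written is the standard Vitali-covering proof of that theorem --- decompose $H\triangle\Phi(H)$ into $H\setminus\Phi(H)$ and $\Phi(H)\setminus H$, kill the first piece by the Vitali covering theorem applied to a tight open neighbourhood of a putative positive-outer-measure set of non-density points, and kill the second by applying the first part to $\mathbb{R}\setminus H$ together with the inclusion $\Phi(H)\setminus H\subseteq(\mathbb{R}\setminus H)\setminus\Phi(\mathbb{R}\setminus H)$. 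All the steps check out: the reduction to bounded $H$ is needed (to ensure $\lambda^{\star}(A)<\infty$ for the contradiction $\lambda^{\star}(A)\le\alpha\,\lambda^{\star}(A)$) and is handled correctly, the family of ``bad'' intervals is indeed a Vitali cover, and the measure bookkeeping is right. This is essentially the argument in the cited reference, so you have supplied a correct proof of a result the paper takes as known.
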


	The above theorem is known as Lebesgue Density Theorem.

	\begin{dfn} \cite{Hejduk 2012}
		We shall say that an operator $\Phi : \mathcal{L} \rightarrow{\mathcal{L}}$ is a lower density
		operator if the following conditions are satisfied:
		\begin{enumerate}
			\item $\Phi (\emptyset) = \emptyset, \Phi(\mathbb{R}) = \mathbb{R}$;
			\item $\forall A,B\in \mathcal{L},  \Phi(A \cap B) = \Phi(A) \cap \Phi(B)$;
			\item $\forall A,B \in \mathcal{L}, A \sim B \implies \Phi(A) = \Phi(B)$;
			\item $\forall A\in \mathcal{L},  A \sim \Phi(A)$.
		\end{enumerate}
		
	\end{dfn}
	
	\begin{dfn} \cite{Hejduk 2012}
		We shall say that an operator $\Psi : \mathcal{L} \rightarrow{2^{\mathbb{R}}}$ is an almost density
		operator if the following conditions are satisfied:
		\begin{enumerate}
			\item $\Psi (\emptyset) = \emptyset, \Psi(\mathbb{R}) = \mathbb{R}$;
			\item $\forall A,B\in \mathcal{L},  \Psi(A \cap B) = \Psi(A) \cap \Psi(B)$;
			\item $\forall A,B \in \mathcal{L}, A \sim B \implies \Psi(A) = \Psi(B)$;
			\item $\forall A\in \mathcal{L},  \lambda(\Psi(A) \setminus A)=0$.
		\end{enumerate}
		
	\end{dfn}
	
	\begin{rmrk} \label{e1}
		A lower density operator is an almost density operator but not conversely. For an example of an almost density operator that is not a lower density operator see \cite{Hejduk 2012}.
	\end{rmrk}
	
	\begin{thm}\cite{Hejduk 2012} \label{e2}
		Let $\Psi : \mathcal{L} \rightarrow{2^{\mathbb{R}}}$ is an almost density operator. Then the family $\mathcal{T}_{\Psi}=\{B \in \mathcal{L}: B \subseteq \Psi(B)\}$ forms a topology on $\mathbb{R}$.
	\end{thm}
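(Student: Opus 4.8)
The plan is to verify the three topology axioms for $\mathcal{T}_{\Psi}$: that $\emptyset$ and $\mathbb{R}$ belong to it, that it is closed under finite intersections, and that it is closed under arbitrary unions. A preliminary observation I would record at the outset is that $\Psi$ is monotone: if $A\subseteq B$ with $A,B\in\mathcal{L}$, then $A\cap B=A$, so condition (2) gives $\Psi(A)=\Psi(A)\cap\Psi(B)$, i.e. $\Psi(A)\subseteq\Psi(B)$. The first axiom is immediate from condition (1), since $\emptyset\subseteq\Psi(\emptyset)=\emptyset$ and $\mathbb{R}\subseteq\Psi(\mathbb{R})=\mathbb{R}$. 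Closure under finite intersections is equally short: if $A\subseteq\Psi(A)$ and $B\subseteq\Psi(B)$, then, using condition (2), $A\cap B\subseteq\Psi(A)\cap\Psi(B)=\Psi(A\cap B)$, and $A\cap B\in\mathcal{L}$, so $A\cap B\in\mathcal{T}_{\Psi}$.

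The substantial part is closure under arbitrary unions. Let $\{A_t\}_{t\in T}$ be a family of members of $\mathcal{T}_{\Psi}$ and put $A=\bigcup_{t\in T}A_t$. The obstacle is that an uncountable union of measurable sets need not be measurable, so I would first replace the family by a measurable ``essential supremum'': there is a countable subfamily $\{A_{t_n}\}_{n\in\mathbb{N}}$ such that $S:=\bigcup_{n\in\mathbb{N}}A_{t_n}$ is measurable, $S\subseteq A$, and $\lambda(A_t\setminus S)=0$ for \emph{every} $t\in T$. This is obtained in the familiar way: reduce to a set of finite measure (via a homeomorphism of $\mathbb{R}$ onto a bounded interval, or by working separately on the blocks $[k,k+1)$), let $\alpha$ be the supremum of $\lambda\bigl(\bigcup_n A_{t_n}\bigr)$ over all countable subfamilies, choose a subfamily attaining $\alpha$, and note that for any $t$ one has $\lambda(S\cup A_t)\le\alpha=\lambda(S)$, whence $\lambda(A_t\setminus S)=0$.

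With $S$ in hand, I would show $A\subseteq\Psi(S)$. Fix $t\in T$. Since $\lambda(A_t\setminus S)=0$ we have $A_t\sim A_t\cap S$, so condition (3) gives $\Psi(A_t)=\Psi(A_t\cap S)$, and monotonicity gives $\Psi(A_t\cap S)\subseteq\Psi(S)$; as $A_t\in\mathcal{T}_{\Psi}$, this yields $A_t\subseteq\Psi(A_t)\subseteq\Psi(S)$, and since $t$ was arbitrary, $A\subseteq\Psi(S)$. In particular $A\setminus S\subseteq\Psi(S)\setminus S$, which is null by condition (4); by completeness of Lebesgue measure $A\setminus S\in\mathcal{L}$, hence $A=S\cup(A\setminus S)\in\mathcal{L}$ and $A\sim S$. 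Finally condition (3) gives $\Psi(A)=\Psi(S)$, so $A\subseteq\Psi(S)=\Psi(A)$, i.e. $A\in\mathcal{T}_{\Psi}$. The only genuinely delicate point is the construction of the essential-supremum set $S$ and the verification that $\lambda(A_t\setminus S)=0$ holds for all $t\in T$ and not merely for the chosen countable subfamily; once that is granted, the rest is a short formal manipulation of conditions (1)–(4) together with the monotonicity noted at the start.
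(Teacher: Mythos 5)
Your proposal is correct and follows essentially the same route the paper takes when it proves the concrete instance of this result for $\mathcal{T}^{\mathcal{I}}_{(s)}$: handle $\emptyset$, $\mathbb{R}$ and finite intersections directly from conditions (1)--(2), and for arbitrary unions replace $\bigcup_t A_t$ by a measurable subset $S$ with $\lambda(A_t\setminus S)=0$ for all $t$, then use conditions (2)--(4) together with monotonicity and completeness of $\lambda$. The only difference is cosmetic: the paper invokes the existence of a measurable kernel of $\bigcup_t A_t$ (citing Halmos), whereas you build the same set explicitly as a countable essential supremum.
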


	In \cite{Filipczak 2004} M. Filipczak and J. Hejduk introduced the notion of $\langle s \rangle$-density as follows. Let $\mathcal{S}$ be the family of all unbounded and non-decreasing sequence of positive reals. Every sequence $\{s_{n}\}\in \mathcal{S}$ is denoted by $\langle s \rangle$. Then a new kind of density point is defined.

	\begin{dfn} \cite{Filipczak 2004} \label{e3}
		Let $\langle s \rangle \in \mathcal{S}$. We say that $x \in \mathbb{R}$ is a density point of a set $A \in \mathcal{L}$ with respect to a sequence $\langle s \rangle \in \mathcal{S}$ or an $\langle s \rangle$-density point of $A$ if  $\lim_{n \rightarrow{\infty}} \frac{\lambda\left(A \cap \left[x-\frac{1}{s_n},x+\frac{1}{s_n}\right]\right)}{\frac{2}{s_n}}=1$.
	\end{dfn}
	
	$x$ is an $\langle s \rangle$-dispersion point of $A$ if $x$ is an $\langle s \rangle$-density point of $\mathbb{R} \setminus A$. 
	
	\begin{ppsn} \cite{Ciesielski} \label{e4}
		Let $A \in \mathcal{L}$ and $x \in \mathbb{R}$. Then
		
		$\lim_{h \rightarrow{0+}} \frac{\lambda(A \cap [x-h,x+h])}{2h}=1$ if and only if $\lim_{n \rightarrow{\infty}} \frac{\lambda\left(A \cap \left[x-\frac{1}{n},x+\frac{1}{n}\right]\right)}{\frac{2}{n}}=1$.
	\end{ppsn}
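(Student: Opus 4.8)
The plan is to pass to complements and then exploit the monotonicity of the measure as a function of the radius. Put $B=\mathbb{R}\setminus A$. Since $\lambda\!\left(A\cap[x-h,x+h]\right)+\lambda\!\left(B\cap[x-h,x+h]\right)=2h$, the condition $\lim_{h\to0+}\frac{\lambda(A\cap[x-h,x+h])}{2h}=1$ is equivalent to $\lim_{h\to0+}\frac{\lambda(B\cap[x-h,x+h])}{2h}=0$, and likewise $\lim_{n\to\infty}\frac{\lambda(A\cap[x-\frac1n,x+\frac1n])}{2/n}=1$ is equivalent to $\lim_{n\to\infty}\frac{\lambda(B\cap[x-\frac1n,x+\frac1n])}{2/n}=0$. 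So it suffices to prove the corresponding equivalence for $B$, with limiting value $0$. The forward implication is then immediate: if the limit over all $h\to0+$ is $0$, then restricting $h$ to the sequence $h_n=\tfrac1n\to0+$ yields the sequential limit.

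For the converse, fix a small $h>0$ and set $n=n(h)=\lfloor 1/h\rfloor$, so that $\frac1{n+1}\le h\le\frac1n$ and hence $[x-h,x+h]\subseteq\left[x-\frac1n,x+\frac1n\right]$. Using that $t\mapsto\lambda\!\left(B\cap[x-t,x+t]\right)$ is nondecreasing, together with $h\ge\frac1{n+1}$ in the denominator, we get
\[
0\le\frac{\lambda\!\left(B\cap[x-h,x+h]\right)}{2h}\le\frac{\lambda\!\left(B\cap\left[x-\frac1n,x+\frac1n\right]\right)}{2h}\le\frac{n+1}{n}\cdot\frac{\lambda\!\left(B\cap\left[x-\frac1n,x+\frac1n\right]\right)}{2/n}.
\]
As $h\to0+$ we have $n(h)\to\infty$, so $\frac{n+1}{n}\to1$ while the last fraction tends to $0$ by hypothesis; squeezing, the middle quantity tends to $0$, which is exactly the statement to be proved.

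The argument is essentially routine: the only point deserving care is the bookkeeping that couples $h\to0+$ with the integer $n=\lfloor1/h\rfloor\to\infty$ and the uniform comparison of the two normalizations $2h$ and $2/n$, which is absorbed into the harmless factor $\frac{n+1}{n}$. I do not expect any genuine obstacle; one could equally well run the estimate directly for $A$ (squeezing between $\frac{n}{n+1}$ times a quantity tending to $1$ and the upper bound $1$), but the complement form is the cleanest.
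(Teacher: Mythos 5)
Your proof is correct. Note, however, that the paper does not prove this proposition at all --- it is quoted verbatim from the reference [Ciesielski--Larson--Ostaszewski] --- so there is no in-paper argument to compare against; your bracketing of $h$ between $\frac{1}{n+1}$ and $\frac{1}{n}$ together with the squeeze (whether run on $A$ directly or, as you do, on the complement) is the standard and expected argument.
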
 
	
	So if we choose in particular $s_n=n$ for all $n \in \mathbb{N}$ in Definition \ref{e3} then we obtain the notion of classical density point.
	
	For any sequence $\langle s \rangle \in \mathcal{S}$ and set $A \in \mathcal{L}$ let 
	\begin{equation*}
		\Phi_{\langle s \rangle}(A)=\{x \in \mathbb{R}: x\  \mbox{is} \     \langle s \rangle- \mbox{density point of A} \}.
	\end{equation*}
	
	\begin{ppsn} \cite{Filipczak 2004}
		For every pair of Lebesgue measurable sets $A,B \in \mathcal{L}$ and a sequence $\langle s \rangle \in \mathcal{S}$ we have
	\end{ppsn}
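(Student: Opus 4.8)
The plan is to reduce each claimed property to an elementary statement about the nested intervals $I_n(x)=\left[x-\tfrac{1}{s_n},\,x+\tfrac{1}{s_n}\right]$, whose lengths $\lvert I_n(x)\rvert=\tfrac{2}{s_n}$ decrease to $0$ since $\langle s\rangle$ is positive, non-decreasing and unbounded. The only ingredients required are monotonicity and finite subadditivity of $\lambda$; the dispersion reformulation, that $x\in\Phi_{\langle s\rangle}(A)$ iff $\tfrac{\lambda((\mathbb{R}\setminus A)\cap I_n(x))}{2/s_n}\to 0$; the trivial inclusion $\Phi(A)\subseteq\Phi_{\langle s\rangle}(A)$, valid because a classical density point of $A$ realises the limit $1$ along every sequence $h\to0^{+}$, in particular along $h=\tfrac{1}{s_n}$; and, for the measure-theoretic item only, the Lebesgue Density Theorem (Theorem~\ref{e5}).

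First I would dispose of the normalisations and the order properties. For $A=\emptyset$ the defining ratio is identically $0$ and for $A=\mathbb{R}$ it is identically $1$, giving $\Phi_{\langle s\rangle}(\emptyset)=\emptyset$ and $\Phi_{\langle s\rangle}(\mathbb{R})=\mathbb{R}$. For monotonicity, $A\subseteq B$ yields $\tfrac{\lambda(A\cap I_n(x))}{2/s_n}\le\tfrac{\lambda(B\cap I_n(x))}{2/s_n}\le1$, so if the left ratio tends to $1$ the middle one is squeezed to $1$, so that $\Phi_{\langle s\rangle}(A)\subseteq\Phi_{\langle s\rangle}(B)$. For $\sim$-invariance, $\lambda(A\triangle B)=0$ forces $\lvert\lambda(A\cap I)-\lambda(B\cap I)\rvert\le\lambda((A\triangle B)\cap I)=0$ for every interval $I$, so the two families of ratios coincide term by term and $\Phi_{\langle s\rangle}(A)=\Phi_{\langle s\rangle}(B)$.

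Next I would establish the intersection formula $\Phi_{\langle s\rangle}(A\cap B)=\Phi_{\langle s\rangle}(A)\cap\Phi_{\langle s\rangle}(B)$. The inclusion $\subseteq$ is immediate from the monotonicity just obtained, applied to $A\cap B\subseteq A$ and $A\cap B\subseteq B$. For $\supseteq$, take $x$ in both right-hand sets and pass to complements: from $\mathbb{R}\setminus(A\cap B)=(\mathbb{R}\setminus A)\cup(\mathbb{R}\setminus B)$ and subadditivity,
\[
\frac{\lambda\bigl((\mathbb{R}\setminus(A\cap B))\cap I_n(x)\bigr)}{2/s_n}\ \le\ \frac{\lambda\bigl((\mathbb{R}\setminus A)\cap I_n(x)\bigr)}{2/s_n}+\frac{\lambda\bigl((\mathbb{R}\setminus B)\cap I_n(x)\bigr)}{2/s_n},
\]
and letting $n\to\infty$ both terms on the right go to $0$ by the dispersion reformulation, so $x\in\Phi_{\langle s\rangle}(A\cap B)$.

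Finally, should the statement also include the Lebesgue-density-type identity $A\sim\Phi_{\langle s\rangle}(A)$, I would split it in two. On one hand $\lambda\bigl(A\setminus\Phi_{\langle s\rangle}(A)\bigr)\le\lambda\bigl(A\setminus\Phi(A)\bigr)=0$, using $\Phi(A)\subseteq\Phi_{\langle s\rangle}(A)$ and Theorem~\ref{e5} for $A$. On the other hand, applying Theorem~\ref{e5} to $\mathbb{R}\setminus A$, almost every $x\in\mathbb{R}\setminus A$ is a classical density point of $\mathbb{R}\setminus A$, hence $\tfrac{\lambda(A\cap[x-h,x+h])}{2h}\to0$ as $h\to0^{+}$ and in particular $x\notin\Phi_{\langle s\rangle}(A)$; thus $\lambda\bigl(\Phi_{\langle s\rangle}(A)\setminus A\bigr)=0$. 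I do not anticipate a real obstacle here: the whole argument is squeezing and subadditivity, the one estimate needing to be written out with some care being the complement/subadditivity bound in the $\supseteq$ part of the intersection identity, and the sole appeal to a non-elementary fact being the Lebesgue Density Theorem in the last item.
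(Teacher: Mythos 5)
Your proof is correct; note that the paper itself states this proposition without proof (it is quoted from Filipczak--Hejduk), but your argument matches, step for step, the proof the paper gives for its ideal-generalized analogue, Proposition \ref{e11}. The only cosmetic differences are that the paper derives the inclusion $\Phi(A\cap B)\supseteq\Phi(A)\cap\Phi(B)$ from the inequality $\lambda(A\cap J_n)+\lambda(B\cap J_n)-\lambda(A\cap B\cap J_n)\le|J_n|$ rather than by passing to complements (the same estimate in disguise), and proves $\lambda(\Phi_{\langle s\rangle}(A)\setminus A)=0$ via $\Phi_{\langle s\rangle}(A)\subseteq\mathbb{R}\setminus\Phi_{\langle s\rangle}(\mathbb{R}\setminus A)$ instead of your direct dispersion-point observation.
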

	\begin{enumerate}
		\item $\Phi_{\langle s \rangle}(\emptyset)=\emptyset$ , $\Phi_{\langle s \rangle}(\mathbb{R})=\mathbb{R}$;
		\item $\Phi_{\langle s \rangle}(A \cap B)=\Phi_{\langle s \rangle}(A) \cap \Phi_{\langle s \rangle}(B)$;
		\item $A \sim B \implies \Phi_{\langle s \rangle}(A)=\Phi_{\langle s \rangle}(B)$;
		\item $\Phi_{\langle s \rangle}(A) \sim A$;
		\item $\Phi(A) \subseteq \Phi_{\langle s \rangle}(A). $

	\end{enumerate}
	
	\begin{crlre}\cite{Filipczak 2004}
		The operator $\Phi_{\langle s \rangle}: \mathcal{L} \rightarrow{\mathcal{L}}$ is a lower density operator in the measure space $(\mathbb{R}, \mathcal{L}, \lambda)$.
	\end{crlre}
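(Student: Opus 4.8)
The plan is to obtain the corollary almost immediately from the preceding Proposition, after disposing of one small measurability point. Recall that, by definition, an operator $\Phi:\mathcal{L}\to\mathcal{L}$ is a lower density operator exactly when it satisfies the four conditions: $\Phi(\emptyset)=\emptyset$ and $\Phi(\mathbb{R})=\mathbb{R}$; $\Phi(A\cap B)=\Phi(A)\cap\Phi(B)$ for all $A,B\in\mathcal{L}$; $A\sim B\implies\Phi(A)=\Phi(B)$; and $A\sim\Phi(A)$ for all $A\in\mathcal{L}$. Items (1)--(4) of the Proposition assert precisely these four statements for the operator $\Phi_{\langle s\rangle}$ in the measure space $(\mathbb{R},\mathcal{L},\lambda)$, so the algebraic content of the corollary is already in hand; what remains is only to check that $\Phi_{\langle s\rangle}$ is genuinely a map $\mathcal{L}\to\mathcal{L}$, i.e.\ that $\Phi_{\langle s\rangle}(A)$ is Lebesgue measurable whenever $A$ is.

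First I would verify this measurability. The cleanest route is to note that for fixed $A\in\mathcal{L}$ the function $x\mapsto \lambda\!\left(A\cap[x-\tfrac1{s_n},x+\tfrac1{s_n}]\right)$ is continuous, so each set $\{x:\lambda(A\cap[x-\tfrac1{s_n},x+\tfrac1{s_n}])\big/\tfrac2{s_n}>1-\tfrac1k\}$ is Euclidean open, and $\Phi_{\langle s\rangle}(A)$ — the set of $x$ at which the quotient tends to $1$ — is obtained from these sets by the usual countable unions and intersections, hence is Borel, a fortiori in $\mathcal{L}$. Alternatively, one may simply invoke property (4) of the Proposition: since $\Phi_{\langle s\rangle}(A)\triangle A$ is a null set, $\Phi_{\langle s\rangle}(A)$ differs from the measurable set $A$ by a set of outer measure zero, and completeness of Lebesgue measure does the rest.

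With measurability settled, I would conclude by direct appeal to the Proposition: conditions (1)--(4) in the definition of a lower density operator are literally items (1)--(4) there, so $\Phi_{\langle s\rangle}$ is a lower density operator on $(\mathbb{R},\mathcal{L},\lambda)$. As a bonus, combining this with Remark~\ref{e1} and Theorem~\ref{e2} shows that $\{B\in\mathcal{L}:B\subseteq\Phi_{\langle s\rangle}(B)\}$ is a topology on $\mathbb{R}$. The only real obstacle in the whole argument is the measurability bookkeeping for $\Phi_{\langle s\rangle}(A)$; once that is in place the corollary is essentially a restatement of the Proposition, and I would expect the author's proof to be correspondingly short.
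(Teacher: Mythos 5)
Your proposal is correct and matches the paper's treatment: the corollary is quoted from Filipczak--Hejduk and stated without proof precisely because conditions (1)--(4) of the immediately preceding Proposition are literally the four defining conditions of a lower density operator, and the measurability of $\Phi_{\langle s\rangle}(A)$ is handled exactly as you describe (the paper carries this out explicitly for the ideal analogue $\Phi^{\mathcal{I}}_{(s)}$, via an $F_{\sigma\delta}$ representation built from the Lipschitz continuity of $x\mapsto\lambda\left(A\cap\left[x-\frac{1}{s_n},x+\frac{1}{s_n}\right]\right)$). Nothing further is needed.
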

	
	\begin{thm}\cite{Filipczak 2004}
		For every sequence $\langle s \rangle \in \mathcal{S}$ the family
		\begin{equation*}
			\mathcal{T}_{\langle s \rangle}=\{A \in \mathcal{L}: A \subseteq \Phi_{\langle s \rangle}(A)\}
		\end{equation*}
		forms a topology such that $\mathcal{T}_{d} \subseteq \mathcal{T}_{\langle s \rangle}$ and $\mathcal{T}_{\langle s \rangle}$ is the von Neumann topology associated with the Lebesgue measure.
	\end{thm}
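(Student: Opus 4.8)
The plan is to lean on the fact, recorded in the preceding Corollary, that $\Phi_{\langle s \rangle}$ is a lower density operator on $(\mathbb{R},\mathcal{L},\lambda)$. Two of the three assertions are then immediate. That $\mathcal{T}_{\langle s \rangle}$ is a topology follows because, by Remark \ref{e1}, a lower density operator is an almost density operator, so Theorem \ref{e2} applies with $\Psi = \Phi_{\langle s \rangle}$. For the inclusion $\mathcal{T}_{d} \subseteq \mathcal{T}_{\langle s \rangle}$, I would use the inclusion $\Phi(A) \subseteq \Phi_{\langle s \rangle}(A)$ from the Proposition above: if $A \in \mathcal{T}_{d}$ then $A \subseteq \Phi(A) \subseteq \Phi_{\langle s \rangle}(A)$, so $A \in \mathcal{T}_{\langle s \rangle}$. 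Since the natural topology is contained in $\mathcal{T}_{d}$, this also shows $\mathcal{T}_{\langle s \rangle}$ refines the Euclidean topology.

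The substantive task is to identify $\mathcal{T}_{\langle s \rangle}$ with the von Neumann topology of $(\mathbb{R},\mathcal{L},\lambda)$, i.e.\ to verify that the $\sigma$-ideal of $\mathcal{T}_{\langle s \rangle}$-meager sets coincides with the $\sigma$-ideal of Lebesgue null sets and that the $\sigma$-algebra of sets with the Baire property in $\mathcal{T}_{\langle s \rangle}$ coincides with $\mathcal{L}$. I would first isolate two structural facts that use only the lower-density axioms: (i) for each $A \in \mathcal{L}$ the set $\Phi_{\langle s \rangle}(A)$ is $\mathcal{T}_{\langle s \rangle}$-open, because $A \sim \Phi_{\langle s \rangle}(A)$ together with invariance under $\sim$ gives $\Phi_{\langle s \rangle}(\Phi_{\langle s \rangle}(A)) = \Phi_{\langle s \rangle}(A) \supseteq \Phi_{\langle s \rangle}(A)$; and (ii) every nonempty $\mathcal{T}_{\langle s \rangle}$-open set has positive measure, since a null open set $U$ would satisfy $U \subseteq \Phi_{\langle s \rangle}(U) = \Phi_{\langle s \rangle}(\emptyset) = \emptyset$. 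From the fact that the complement of any null set $N$ is open (as $\Phi_{\langle s \rangle}(\mathbb{R}\setminus N) = \Phi_{\langle s \rangle}(\mathbb{R}) = \mathbb{R}$) together with (ii), it follows that every null set is $\mathcal{T}_{\langle s \rangle}$-closed with empty interior, hence $\mathcal{T}_{\langle s \rangle}$-nowhere dense; in fact closed and discrete, since all of its subsets are again null and therefore closed.

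For the reverse direction I would show every $\mathcal{T}_{\langle s \rangle}$-nowhere dense set $E$ (measurable or not) is null. Let $H$ be a measurable hull of $E$, so $\lambda^{*}(E\cap I) = \lambda(H\cap I)$ for all $I \in \mathcal{L}$. The key claim is $\Phi_{\langle s \rangle}(H) \subseteq \operatorname{cl}_{\mathcal{T}_{\langle s \rangle}}(E)$: if some $x\in\Phi_{\langle s \rangle}(H)$ had an open neighbourhood $U$ disjoint from $E$, then $U\in\mathcal{L}$, and by the intersection property $x \in \Phi_{\langle s \rangle}(U)\cap\Phi_{\langle s \rangle}(H) = \Phi_{\langle s \rangle}(U\cap H)$, while $\lambda(U\cap H) = \lambda^{*}(U\cap E) = 0$ forces $\Phi_{\langle s \rangle}(U\cap H) = \Phi_{\langle s \rangle}(\emptyset) = \emptyset$, a contradiction. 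Since $E$ is nowhere dense and $\Phi_{\langle s \rangle}(H)$ is open by (i), we get $\Phi_{\langle s \rangle}(H) = \emptyset$, whence $\lambda(H) = 0$ by the axiom $H \sim \Phi_{\langle s \rangle}(H)$, and so $E$ is null. Thus $\mathcal{T}_{\langle s \rangle}$-nowhere dense sets, $\mathcal{T}_{\langle s \rangle}$-meager sets, and Lebesgue null sets all coincide (the null ideal being a $\sigma$-ideal). Finally, for the Baire property: every $\mathcal{T}_{\langle s \rangle}$-open set is measurable by definition of the topology and every meager set is null hence measurable, so sets with the Baire property are measurable; conversely, for $A\in\mathcal{L}$ we have $A = \Phi_{\langle s \rangle}(A)\,\triangle\,(A\triangle\Phi_{\langle s \rangle}(A))$ with $\Phi_{\langle s \rangle}(A)$ open and $A\triangle\Phi_{\langle s \rangle}(A)$ null, so $A$ has the Baire property. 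This establishes the two defining conditions.

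I expect the main obstacle to be precisely the closure computation $\Phi_{\langle s \rangle}(H)\subseteq\operatorname{cl}_{\mathcal{T}_{\langle s \rangle}}(E)$ for a possibly non-measurable nowhere dense $E$: one must pass to a measurable hull and apply the intersection property at exactly the right point. Everything else is routine bookkeeping with the lower-density axioms, and indeed the argument shows more generally that the topology generated by \emph{any} lower density operator on $(\mathbb{R},\mathcal{L},\lambda)$ is a von Neumann topology.
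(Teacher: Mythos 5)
Your argument is correct, but note that the paper itself offers no proof of this statement: it is quoted verbatim from Filipczak--Hejduk \cite{Filipczak 2004}, so there is no internal proof to match against. For the first two assertions you use exactly the machinery the paper sets up (the Corollary that $\Phi_{\langle s \rangle}$ is a lower density operator, Remark \ref{e1}, Theorem \ref{e2}, and the inclusion $\Phi(A)\subseteq\Phi_{\langle s \rangle}(A)$), which is also how the paper later handles the generalized operator $\Phi^{\mathcal{I}}_{(s)}$ --- there the authors additionally write out the arbitrary-union step by hand via a measurable kernel, a device closely related to your measurable-hull argument. The von Neumann identification is the genuinely substantive part of your write-up, and it is sound: you verify the two defining conditions (null sets $=$ meager sets, measurable sets $=$ sets with the Baire property) using only the lower-density axioms, with the one nontrivial step being the inclusion $\Phi_{\langle s \rangle}(H)\subseteq\operatorname{cl}_{\mathcal{T}_{\langle s \rangle}}(E)$ for a measurable hull $H$ of a nowhere dense $E$; the appeal to $\lambda(U\cap H)=\lambda^{*}(U\cap E)$ for measurable $U$ is a legitimate property of hulls for the ($\sigma$-finite) Lebesgue measure. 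As you observe, the argument is purely axiomatic in $\Phi_{\langle s \rangle}$, so it transfers verbatim to $\Phi^{\mathcal{I}}_{(s)}$ via Corollary \ref{lower}; this actually yields a strengthening (the von Neumann property of $\mathcal{T}^{\mathcal{I}}_{(s)}$) that the paper never states.
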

	
	In \cite{Filipczak 2004} a characterization of equality was formulated for $\mathcal{T}_{d}$ and $\mathcal{T}_{\langle s \rangle}$.
	\begin{thm}\cite{Filipczak 2004}
		Let $\langle s \rangle \in \mathcal{S}$ be a sequence then $\mathcal{T}_{d} = \mathcal{T}_{\langle s \rangle}$ if and only if $\liminf_{n \rightarrow{\infty}} \frac{s_n}{s_{n+1}}>0$.
	\end{thm}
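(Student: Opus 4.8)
The plan is to prove both implications by trapping an arbitrary small radius $h$ between two consecutive reciprocals, $\frac{1}{s_{n+1}}\le h\le\frac{1}{s_n}$; this is always possible for $h\in\left(0,\frac{1}{s_1}\right]$ because $\langle s\rangle$ is non-decreasing and unbounded, so $\frac{1}{s_n}$ decreases monotonically from $\frac{1}{s_1}$ down to $0$. Since the inclusion $\mathcal{T}_d\subseteq\mathcal{T}_{\langle s \rangle}$ is already available (it follows at once from $\Phi(A)\subseteq\Phi_{\langle s \rangle}(A)$), in both directions it suffices to compare the operators $\Phi$ and $\Phi_{\langle s \rangle}$, and I will do this through their dispersion-point formulations, i.e.\ via the complement $B=\mathbb{R}\setminus A$.

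For the sufficiency, suppose $c:=\liminf_n\frac{s_n}{s_{n+1}}>0$, so that $\left(\frac{s_{n+1}}{s_n}\right)_n$ is bounded, say by $M$. Fix $A\in\mathcal{L}$ and a point $x\in\Phi_{\langle s \rangle}(A)$; I may take $x=0$ and set $B=\mathbb{R}\setminus A$, so that $\frac{\lambda\left(B\cap\left[-\frac{1}{s_n},\frac{1}{s_n}\right]\right)}{2/s_n}\to 0$. Given small $h$, choosing $n$ with $\frac{1}{s_{n+1}}\le h\le\frac{1}{s_n}$ gives $[-h,h]\subseteq\left[-\frac{1}{s_n},\frac{1}{s_n}\right]$ and $2h\ge\frac{2}{s_{n+1}}$, hence
\[
\frac{\lambda(B\cap[-h,h])}{2h}\ \le\ \frac{\lambda\left(B\cap\left[-\frac{1}{s_n},\frac{1}{s_n}\right]\right)}{2/s_{n+1}}\ =\ \frac{s_{n+1}}{s_n}\cdot\frac{\lambda\left(B\cap\left[-\frac{1}{s_n},\frac{1}{s_n}\right]\right)}{2/s_n}\ \le\ M\cdot\frac{\lambda\left(B\cap\left[-\frac{1}{s_n},\frac{1}{s_n}\right]\right)}{2/s_n},
\]
and since $h\to0^+$ forces $n\to\infty$, the right-hand side tends to $0$. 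Thus $0\in\Phi(A)$, so $\Phi_{\langle s \rangle}(A)\subseteq\Phi(A)$; combined with $\Phi(A)\subseteq\Phi_{\langle s \rangle}(A)$ this yields $\Phi_{\langle s \rangle}=\Phi$ on $\mathcal{L}$, whence $\mathcal{T}_{\langle s \rangle}=\mathcal{T}_d$.

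For the necessity I will argue the contrapositive: assuming $\liminf_n\frac{s_n}{s_{n+1}}=0$, I will exhibit a set in $\mathcal{T}_{\langle s \rangle}\setminus\mathcal{T}_d$. Pick a subsequence $(n_k)$ with $r_k:=\frac{s_{n_k}}{s_{n_k+1}}\to 0$ and, after thinning, also $\sum_k\sqrt{r_k}<\infty$. Put $c_k:=\frac{\sqrt{r_k}}{s_{n_k}}$, so that $\frac{1}{s_{n_k+1}}<c_k<\frac{1}{s_{n_k}}$ (since $c_k s_{n_k}=\sqrt{r_k}\to0$ while $c_k s_{n_k+1}=\frac{1}{\sqrt{r_k}}\to\infty$), and let $B$ be the closed set $\{0\}\cup\bigcup_k\left(\left[\frac{1}{s_{n_k+1}},c_k\right]\cup\left[-c_k,-\frac{1}{s_{n_k+1}}\right]\right)$, $A:=\mathbb{R}\setminus B$, and $G:=A\cup\{0\}$. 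Every point of the open set $A$ is a classical (hence an $\langle s \rangle$-) density point of $A\sim G$; and for the point $0$, for large $n$ the part of $B$ in $\left[0,\frac{1}{s_n}\right]$ is contained in $\bigcup_{k:\,n_k\ge n}\left[\frac{1}{s_{n_k+1}},c_k\right]$ (the interval indexed by $n_k=n-1$, if present, begins at $\frac{1}{s_n}$ and contributes nothing), so $\lambda\left(B\cap\left[0,\frac{1}{s_n}\right]\right)\le\sum_{k:\,n_k\ge n}c_k\le\frac{1}{s_n}\sum_{k:\,n_k\ge n}\sqrt{r_k}=o\!\left(\frac{1}{s_n}\right)$; together with symmetry this gives $0\in\Phi_{\langle s \rangle}(G)$, so $G\in\mathcal{T}_{\langle s \rangle}$. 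But testing the radius $h=c_k$ gives $\frac{\lambda(B\cap[-c_k,c_k])}{2c_k}\ge 1-\frac{1}{c_k s_{n_k+1}}=1-\sqrt{r_k}\to1$, so $0$ is not a classical density point of $A\sim G$, i.e.\ $G\notin\mathcal{T}_d$, and therefore $\mathcal{T}_d\subsetneq\mathcal{T}_{\langle s \rangle}$.

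I expect the necessity direction to be the main obstacle, for two reasons. First, membership in $\mathcal{T}_{\langle s \rangle}$ demands that \emph{every} point of the set be an $\langle s \rangle$-density point, not just the single point witnessing the failure of classical density; the remedy is to take the witnessing set to be an open set $A$ enlarged by the one extra point $0$, so that all the "generic" points are handled automatically. Second, one must choose the gap-scales sparsely enough — $\sum_k\sqrt{r_k}<\infty$ — that the mass of $B$ near $0$ is negligible at every reciprocal scale $\frac{1}{s_n}$ (which forces $0\in\Phi_{\langle s \rangle}(G)$) while remaining substantial at the scales $c_k$ (which forces $0\notin\Phi(G)$); balancing these two requirements is the technical heart of the construction.
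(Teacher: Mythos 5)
Your argument is correct, but note that the paper does not actually prove this statement: it is quoted from Filipczak--Hejduk \cite{Filipczak 2004} without proof, so there is no in-paper argument to match line by line. Your sufficiency half is essentially the same radius-trapping technique that the authors do carry out later, in the proof of Theorem \ref{e12}, for the generalized $\mathcal{I}_{(s)}$ setting: pass to the complement, trap the radius between consecutive reciprocals $\frac{1}{s_{n+1}}\le h\le\frac{1}{s_n}$, and use boundedness of $\frac{s_{n+1}}{s_n}$; if anything your version is cleaner, since you bound all the ratios directly from the liminf hypothesis rather than working along a subsequence realizing the liminf. The necessity half has no counterpart anywhere in the paper (the analogous converse for $\mathcal{T}^{\mathcal{I}}_{(s)}$ is explicitly posed as an open problem), and your construction is sound: the choice $c_k=\frac{\sqrt{r_k}}{s_{n_k}}$ places each block strictly between consecutive reciprocal scales, the summability $\sum_k\sqrt{r_k}<\infty$ makes the mass of $B$ in $\left[0,\frac{1}{s_n}\right]$ of order $o\!\left(\frac{1}{s_n}\right)$ at every scale so that $0\in\Phi_{\langle s\rangle}(G)$, while the test radius $h=c_k$ shows $0\notin\Phi(G)$; and taking $G$ to be an open set together with the single point $0$ correctly disposes of the requirement that \emph{every} point of $G$ be an $\langle s\rangle$-density point. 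The only step worth spelling out is that $B$ is indeed closed (the left endpoints $\frac{1}{s_{n_k+1}}$ accumulate only at $0$, which you include), so that $A$ is open and all of its points are automatically classical density points.
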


	\section{$\mathcal{I}_{(s)}$-density}\label{section 2}

	Through out we consider the measure space $(\mathbb{R}, \mathcal{L}, \lambda)$ where $\mathbb{R}$ is the set of real numbers, $\mathcal{L}$ is the $\sigma$-algebra of Lebesgue measurable sets and $\lambda$ is the Lebesgue measure. 
	
	\begin{dfn}\cite{salat}
		A real valued sequence $x=\{x_n\}_{n \in \mathbb{N}}$ is said to be $\mathcal{I}$-monotonic increasing (res. $\mathcal{I}$-monotonic decreasing), if there is a set $\{k_1<k_2< \dots\}\in \mathcal{F}(\mathcal{I})$ such that $x_{k_i} \leq x_{k_{i+1}} (\mbox{res.} \ x_{k_i} \geq x_{k_{i+1}})$ for every $i \in \mathbb{N}$. 
	\end{dfn}

	For any nontrivial admissible ideal $\mathcal{I}$, the family of all unbounded, $\mathcal{I}$-monotonic increasing positive real sequences is denoted by $\Sigma_{\mathcal{I}}$. If a sequence $\{s_n\}_{n \in \mathbb{N}}$ is chosen from the family $\Sigma_{\mathcal{I}}$ it will be denoted by $(s)$.

	\begin{dfn} \label{e6}
		Consider the measure space $(\mathbb{R}, \mathcal{L}, \lambda)$ and let $\mathcal{I}$ be a nontrivial admissible ideal of subsets of $\mathbb{N}$. Now for a Lebesgue measurable set $A$, a point $p$ in $\mathbb{R}$ and $(s) \in \Sigma_{\mathcal{I}}$ let us take a collection of closed intervals about $p$ as $J_{n}=\left[p-\frac{1}{s_n},p+\frac{1}{s_n}\right]$ for $n \in \mathbb{N}$.

		Now let us take $x_n = \frac{\lambda(A \cap J_n)}{|J_n|}$. Then clearly $x=\{x_n\}_{n \in \mathbb{N}}$ is a sequence of non-negative real numbers. If $\mathcal{I}-\lim_n x_n$ exists then we denote the value by $\mathcal{I}_{(s)}-d(p,A)$ which we call as $\mathcal{I}_{(s)}$-density of $A$ at the point $p$ and clearly $\mathcal{I}_{(s)}-d(p,A)=\mathcal{I}-\lim x_n$.
	\end{dfn}

	A point $p_0 \in \mathbb{R}$ is an $\mathcal{I}_{(s)}$-density point of $A \in \mathcal{L}$ if $\mathcal{I}_{(s)}-d(p_0,A)=1$. 
	
	If a point $p_0\in \mathbb{R}$ is an $\mathcal{I}_{(s)}$-density point of the set $\mathbb{R}\setminus A$, then $p_0$ is an $\mathcal{I}_{(s)}$-dispersion point of $A$.
	
	If in the above definition we take $J_{n}=\left[p,p+\frac{1}{s_n}\right]$ for $n \in \mathbb{N}$ so that $\mathcal{I}-\lim_n x_n=1$ then the point $p \in \mathbb{R}$ is a right $\mathcal{I}_{(s)}$-density point of $A$ and if we take $J_{n}=\left[p-\frac{1}{s_n},p\right]$ for $n \in \mathbb{N}$ so that $\mathcal{I}-\lim_n x_n=1$ then the point $p \in \mathbb{R}$ is a left $\mathcal{I}_{(s)}$-density point of $A$. We note that
	$$ \lambda\left(A \cap \left[p-\frac{1}{s_n},p+\frac{1}{s_n}\right]\right)=\lambda\left(A \cap \left[p-\frac{1}{s_n},p\right]\right)+\lambda\left(A \cap \left[p,p+\frac{1}{s_n}\right]\right).$$
	It can be easily proved that if a point $p \in \mathbb{R}$ is both left and right $\mathcal{I}_{(s)}$-density point of $A$, then the point $p$ is an $\mathcal{I}_{(s)}$-density point of $A$.

	\begin{note}
		Thus the following three kinds of density point may be distinguished in this context. 
	\end{note}
	\begin{enumerate}
		\item If we take the sequence $s_n=n$ for all $n \in \mathbb{N}$ then by Proposition \ref{e4} we obtain the notion of classical density point. 
		\item If $\{s_n\} \in \mathcal{S}$ then the notion of $\langle s \rangle$-density point is obtained.
		\item For $\{s_n\} \in \Sigma_{\mathcal{I}}$ we have introduced the notion of $\mathcal{I}_{(s)}$-density point.
	\end{enumerate}

	\begin{xmpl} \label{e7}
		Let us consider the ideal $\mathcal{J}$ a subcollection of $2^{\mathbb{N}}$ where $\mathcal{J}$ consists of all those subsets of $\mathbb{N}$ whose asymptotic density is zero. Let us take the set $A$ as the open interval $(-1,1)$ and the point $p$ to be $0$. For any positive real sequence $\{s_n\}_{n \in \mathbb{N}}$ let us consider a collection of closed bounded intervals $J_n=\left[-\frac{1}{s_n},\frac{1}{s_n}\right]$ for all $n \in \mathbb{N}$. We make a choice of the sequence $\{s_n\}_{n \in \mathbb{N}}$ as follows:
		
		\begin{equation*}
			s_n =
			\left\{
			\begin{array}{ll}
				n!  & \mbox{if } n \neq m^2 \ \mbox{where} \ m \in \mathbb{N}\\
				n^{-1} & \mbox{if } n=m^2 \ \mbox{where} \ m \in \mathbb{N}.
			\end{array}
			\right.
		\end{equation*}
		
		Then $s_2<s_3<s_5<s_6<s_7< \dots$ and also the set of positive integers $\{2,3,5,6,7,8,10,\dots\}=\{n \in \mathbb{N}:n \neq m^2 \ \mbox{where} \ m \in \mathbb{N}\} \in \mathcal{F}(\mathcal{J})$, since $\{n \in \mathbb{N}:n \neq m^2 \ \mbox{where} \ m \in \mathbb{N}\}$ has natural density zero. So, $\{s_n\}_{n \in \mathbb{N}}$ is $\mathcal{J}$-monotonic increasing and unbounded positive real sequence. So, $\{s_n\} \in \Sigma_{\mathcal{J}}$.
		
		Now the sequence $x_n=\frac{\lambda(A \cap J_n)}{|J_n|}$ for $n \in \mathbb{N}$ becomes
		
		\begin{equation*}
			x_n =
			\left\{
			\begin{array}{ll}
				1  & \mbox{if } n \neq m^2 \ \mbox{where} \ m \in \mathbb{N}\\
				n^{-1} & \mbox{if } n=m^2 \ \mbox{where} \ m \in \mathbb{N}.
			\end{array}
			\right.
		\end{equation*}
		Therefore, since the subsequence $\{x_{n}\}_{n=m^2}$ converges to $0$ and the subsequence $\{x_n\}_{n \neq m^2 }$ converges to $1$, $\lim_{n} x_n$ does not exists. Since for any $\varepsilon >0$, $\{n:|x_n-1| \geq \varepsilon \} \subseteq \{n:n = m^2 \ \mbox{where} \ m \in \mathbb{N}\}$ and $\{n:n = m^2 \ \mbox{where} \ m \in \mathbb{N}\} \in \mathcal{J}$, so $\{n:|x_n-1| \geq \varepsilon \} \in \mathcal{J}$. Thus, $\mathcal{J}-\lim_{n} x_n=1$. Consequently, 0 is a $\mathcal{J}_{(s)}$-density point of $A$.

	\end{xmpl}
	
	In order to establish that $\mathcal{I}_{(s)}$-density point is indeed a generalization of $\langle s \rangle$-density point we need the following theorem and its corollary.
	
	\begin{thm}
		Let $A \in \mathcal{L}$, $x \in \mathbb{R}$ and $\langle s \rangle = \{s_n\}_{n \in \mathbb{N}} \in \mathcal{S}$. If $\{r_n\}_{n \in \mathbb{N}}$ be any real sequence such that there exists $n_0 \in \mathbb{N}$, for which $s_n=r_n$  for $n \geq n_0$, then $x$ is $\langle s \rangle -$density point of $A$ if and only if $x$ is $\{r_n\}-$density point of $A$.
	\end{thm}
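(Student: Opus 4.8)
The plan is to reduce the claim to the elementary fact that the convergence of a real sequence is determined by its tail, so that modifying finitely many terms is harmless. Following Definition~\ref{e3}, I set
\begin{equation*}
	x_n = \frac{\lambda\!\left(A \cap \left[x - \tfrac{1}{s_n}, x + \tfrac{1}{s_n}\right]\right)}{\tfrac{2}{s_n}}, \qquad y_n = \frac{\lambda\!\left(A \cap \left[x - \tfrac{1}{r_n}, x + \tfrac{1}{r_n}\right]\right)}{\tfrac{2}{r_n}}.
\end{equation*}
Since $\langle s \rangle \in \mathcal{S}$ consists of positive reals and $r_n = s_n$ for $n \geq n_0$, the quantity $y_n$ is well defined for all $n \geq n_0$, which is all that is relevant for a limit as $n \to \infty$.

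Next I would observe that for every $n \geq n_0$ the hypothesis $s_n = r_n$ forces $\left[x - \tfrac{1}{s_n}, x + \tfrac{1}{s_n}\right] = \left[x - \tfrac{1}{r_n}, x + \tfrac{1}{r_n}\right]$ and $\tfrac{2}{s_n} = \tfrac{2}{r_n}$, whence $x_n = y_n$ for all $n \geq n_0$. Consequently, by the standard fact that two sequences which eventually agree have the same limiting behaviour, $\lim_n x_n = 1$ holds if and only if $\lim_n y_n = 1$. By Definition~\ref{e3} the former assertion is precisely that $x$ is an $\langle s \rangle$-density point of $A$ and the latter that $x$ is a $\{r_n\}$-density point of $A$, which gives the desired equivalence.

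I do not expect any genuine obstacle here; the only points requiring care are purely formal, namely checking that the ratio defining $y_n$ makes sense (it does, since $r_n = s_n > 0$ once $n \geq n_0$) and recording explicitly that the notion of $\{r_n\}$-density point concerns only the behaviour of the ratio sequence for large $n$. The substance of the statement is the trivial remark that finitely many initial terms of the sequence $\langle s \rangle$ play no role in the $\langle s \rangle$-density of a set at a point, and the argument above simply makes that precise.
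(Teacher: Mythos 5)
Your proposal is correct and follows essentially the same route as the paper: both arguments observe that $s_n = r_n$ for $n \geq n_0$ forces the two ratio sequences to coincide from $n_0$ onwards, and then invoke the fact that the limit of a sequence depends only on its tail. The only difference is cosmetic, namely that you name the two ratio sequences explicitly before comparing them.
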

	
	\begin{proof}
		Since $s_n=r_n$  for $n \geq n_0$ so
		$$\frac{\lambda \left(A \cap \left[x-\frac{1}{s_n},x+\frac{1}{s_n}\right]\right)}{\frac{2}{s_n}}=\frac{\lambda \left(A \cap \left[x-\frac{1}{r_n},x+\frac{1}{r_n}\right]\right)}{\frac{2}{r_n}} \quad \forall n\geq n_0.$$ 
		Thus, 
		\begin{align*}
			x \ \mbox{is} \ \langle s \rangle-\mbox{density point of} \ A &\Leftrightarrow \lim_{n \rightarrow{\infty}} \frac{\lambda \left(A \cap \left[x-\frac{1}{s_n},x+\frac{1}{s_n}\right]\right)}{\frac{2}{s_n}}=1\\
			&\Leftrightarrow \lim_{n \rightarrow{\infty}}\frac{\lambda \left(A \cap \left[x-\frac{1}{r_n},x+\frac{1}{r_n}\right]\right)}{\frac{2}{r_n}}=1\\
			&\Leftrightarrow x \ \mbox{is} \ \{r_n\}-\mbox{density point of} \ A.
		\end{align*}
		This completes the proof of the theorem.
	\end{proof}
	
	\begin{crlre} \label{e8}
		In Definition \ref{e3} if we choose $\{s_n\}$ to be any unbounded positive real sequence and there exists $n_0 \in \mathbb{N}$ for which $\{s_n\}$ for $n \geq n_0$ is non-decreasing, then the definition remains valid as well and we can write without any loss of generality, $x$ is $\langle s \rangle -$density point of $A$.
	\end{crlre}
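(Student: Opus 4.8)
The plan is to deduce this directly from the Theorem by replacing the first few terms of $\{s_n\}$ with constants, so as to land inside $\mathcal{S}$. Given an unbounded positive real sequence $\{s_n\}_{n\in\mathbb{N}}$ which is non-decreasing for $n\ge n_0$, I would define $\{t_n\}_{n\in\mathbb{N}}$ by $t_n=s_{n_0}$ for $n\le n_0$ and $t_n=s_n$ for $n>n_0$, and denote it $\langle t\rangle$. The point of this construction is that $\langle t\rangle$ agrees with $\{s_n\}$ from index $n_0$ onward, while being an honest member of $\mathcal{S}$ to which Definition \ref{e3} literally applies.

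First I would verify that $\langle t\rangle\in\mathcal{S}$. Positivity is inherited from $\{s_n\}$. For monotonicity, the initial block $t_1=\cdots=t_{n_0}=s_{n_0}$ is constant, it matches the tail at the junction since $t_{n_0}=s_{n_0}$, and the tail $t_n=s_n$ $(n\ge n_0)$ is non-decreasing by hypothesis. For unboundedness, only the finitely many indices $1,\dots,n_0-1$ have been altered; given any $M>\max\{s_1,\dots,s_{n_0-1}\}$, unboundedness of $\{s_n\}$ produces an index $n$ with $s_n>M$, and necessarily $n\ge n_0$, so $t_n=s_n>M$. This is also the place where the hypothesis that $\{s_n\}$ is eventually non-decreasing is genuinely needed: without it no member of $\mathcal{S}$ can agree with $\{s_n\}$ on a tail, because a tail of a non-decreasing sequence is again non-decreasing.

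Since $t_n=s_n$ for every $n\ge n_0$, the Theorem — applied with $\langle t\rangle\in\mathcal{S}$ in the role of $\langle s\rangle$ and with $\{s_n\}$ in the role of $\{r_n\}$ — shows that $x$ is a $\langle t\rangle$-density point of $A$ if and only if $\lim_{n\to\infty}\lambda\big(A\cap[x-\tfrac{1}{s_n},x+\tfrac{1}{s_n}]\big)\big/\tfrac{2}{s_n}=1$. The left-hand condition is meaningful precisely because $\langle t\rangle\in\mathcal{S}$; moreover any two admissible choices of the first $n_0-1$ terms of $\{s_n\}$ yield the same $\langle t\rangle$ up to finitely many terms, hence the same notion by the Theorem. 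Thus the limit condition written through $\{s_n\}$ is well posed and it is legitimate to call $x$ an $\langle s\rangle$-density point of $A$. I do not expect any real obstacle here — the only step requiring a moment's care is the unboundedness of the tail of an unbounded sequence, and the entire substance of the statement is already contained in the preceding Theorem.
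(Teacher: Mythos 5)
Your proposal is correct and matches the paper's intent: the corollary is stated there without proof as an immediate consequence of the preceding theorem, and your construction of $\langle t\rangle\in\mathcal{S}$ by flattening the first $n_0$ terms and invoking the theorem with $\{s_n\}$ in the role of $\{r_n\}$ is exactly the deduction being relied upon. The verification that $\langle t\rangle$ is positive, non-decreasing and unbounded, and the remark on well-definedness, are all sound.
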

	
	\begin{note}
		In particular in Definition \ref{e6} if we take $\mathcal{I}= \mathcal{I}_{fin}$ where $\mathcal{I}_{fin}$ is the class of all finite subsets of $\mathbb{N}$, then the collection $\Sigma_{\mathcal{I}}$ contains unbounded and $\mathcal{I}_{fin}$-monotonic increasing positive real sequences. 
	\end{note}
	
	\begin{thm}\label{fin}
		Let $A \in \mathcal{L}$, $x \in \mathbb{R}$ and $(s)= \{s_n\}_{n \in \mathbb{N}}\in \Sigma_{\mathcal{I}_{fin}}$. If $x$ is an $\mathcal{I}_{fin(s)}-$density point of $A$ then it is $\langle s \rangle -$density point of $A$.
	\end{thm}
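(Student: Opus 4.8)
The plan is to reduce the statement to Corollary \ref{e8} together with the elementary observation that $\mathcal{I}_{fin}$-convergence is nothing but ordinary convergence.

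First I would unpack the hypothesis $(s)\in\Sigma_{\mathcal{I}_{fin}}$. By definition there is a set $\{k_1<k_2<\cdots\}\in\mathcal{F}(\mathcal{I}_{fin})$ with $s_{k_i}\leq s_{k_{i+1}}$ for every $i$. Since $\mathcal{F}(\mathcal{I}_{fin})$ consists precisely of the cofinite subsets of $\mathbb{N}$, the complement $\mathbb{N}\setminus\{k_i:i\in\mathbb{N}\}$ is finite, so there is an $n_0\in\mathbb{N}$ with $\{n\in\mathbb{N}:n\geq n_0\}\subseteq\{k_i:i\in\mathbb{N}\}$. Hence the enumeration $k_1<k_2<\cdots$ eventually runs through the consecutive integers $n_0,n_0+1,n_0+2,\dots$, and therefore $s_n\leq s_{n+1}$ for every $n\geq n_0$. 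Thus $\{s_n\}$ is an unbounded positive real sequence that is non-decreasing from the index $n_0$ onward, so by Corollary \ref{e8} it is legitimate to speak of an $\langle s\rangle$-density point of $A$, and $x$ is such a point exactly when $\lim_{n\to\infty}\dfrac{\lambda\big(A\cap\big[x-\frac{1}{s_n},x+\frac{1}{s_n}\big]\big)}{\frac{2}{s_n}}=1$.

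Next I would translate the hypothesis that $x$ is an $\mathcal{I}_{fin(s)}$-density point of $A$. Writing $x_n=\dfrac{\lambda(A\cap J_n)}{|J_n|}$ with $J_n=\big[x-\frac{1}{s_n},x+\frac{1}{s_n}\big]$ as in Definition \ref{e6}, this means $\mathcal{I}_{fin}\text{-}\lim_n x_n=1$, i.e.\ for every $\varepsilon>0$ the set $\{n:|x_n-1|\geq\varepsilon\}$ belongs to $\mathcal{I}_{fin}$. But a subset of $\mathbb{N}$ lies in $\mathcal{I}_{fin}$ if and only if it is finite, so this is exactly the assertion that $x_n\to1$ in the usual sense, that is, $\lim_{n\to\infty}\dfrac{\lambda(A\cap J_n)}{|J_n|}=1$.

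Finally I would combine the two steps: the limit relation obtained in the previous paragraph is, by the first paragraph and Corollary \ref{e8}, precisely the condition for $x$ to be an $\langle s\rangle$-density point of $A$, which finishes the argument; indeed the same chain of equivalences yields the converse implication as well. The only point requiring a moment's care — and the closest thing to an obstacle — is the first paragraph's observation that an $\mathcal{I}_{fin}$-monotonic increasing sequence is genuinely eventually monotone; this is where the specific structure of the Fr\'echet filter of cofinite sets is used, and nothing deeper is needed.
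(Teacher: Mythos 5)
Your proposal is correct and follows essentially the same route as the paper: both arguments use the cofiniteness of $\mathcal{F}(\mathcal{I}_{fin})$ to show that $(s)$ is eventually non-decreasing, convert the $\mathcal{I}_{fin}$-limit into an ordinary limit, and then invoke Corollary \ref{e8}. Your direct observation that $\mathcal{I}_{fin}$-convergence coincides with ordinary convergence cleanly replaces the paper's longer manipulation with the sets $\mathcal{B}$ and $\mathcal{C}$, but it is the same underlying idea.
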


	\begin{proof}
		Let $(s) \in \Sigma_{\mathcal{I}_{fin}}$. So, $(s)$ is $\mathcal{I}_{fin}$-monotonic increasing positive real sequence. Thus, there exists $\{k_1<k_2<k_3< \cdots\} \in \mathcal{F}(\mathcal{I}_{fin})$ such that $s_{k_i} \leq s_{k_{i+1}}$ for every $i \in \mathbb{N}$. Now if $\mathbb{N} \setminus \{k_1<k_2<k_3< \cdots\} =\{l_1,l_2, \cdots, l_{N}\}$, then there exists some $r_0 \in \mathbb{N}$ such that $l_{N}<k_{r_0}$. So, $\{s_n\}_{n \geq k_{r_0}}$ is non-decreasing. We claim that for $t \in \mathbb{N}$,
		\begin{equation}\label{consecutive}
			k_{r_0+(t+1)}=k_{(r_0+t)}+1.
		\end{equation}
		Since, $\{k_1<k_2<k_3< \cdots\}\cup \{l_1,l_2, \cdots, l_{N}\}=\mathbb{N}$ and $l_{N}<k_{r_0}$ so $k_{r_0+i}=k_{r_0}+i$ for $i \in \mathbb{N}$. Thus we get consecutive natural numbers from $k_{r_0}$ onwards in the set $\{k_1<k_2<k_3< \cdots\}$.

		Now let $x$ is $\mathcal{I}_{fin(s)}-$density point of $A$. So for given any $\epsilon >0$,
		$$\left\{n \in \mathbb{N}:1-\epsilon <\frac{\lambda \left(A \cap \left[x-\frac{1}{s_n},x+\frac{1}{s_n}\right]\right)}{\frac{2}{s_n}}<1+\epsilon \right\} \in \mathcal{F}(\mathcal{I}_{fin}).$$
		Thus, 
		$$\mathcal{B}=\left\{n \in \mathbb{N}:1-\epsilon <\frac{\lambda \left(A \cap \left[x-\frac{1}{s_n},x+\frac{1}{s_n}\right]\right)}{\frac{2}{s_n}}<1+\epsilon \right\} \cap  \{k_1<k_2<k_3< \cdots\} \in \mathcal{F}(\mathcal{I}_{fin}).$$
		As a result, $\mathcal{B} \subset \{k_1<k_2<k_3< \cdots\}$ and so $\{k_1<k_2<k_3< \cdots\}=\mathcal{B} \cup \mathcal{C}$ for some set $\mathcal{C}$. Clearly, $$\mathcal{C}=\{k_1<k_2<k_3< \cdots\} \setminus \mathcal{B}=\{k_1<k_2<k_3< \cdots\} \cap \mathcal{B}^{c}.$$ 
		Thus, $\mathcal{C} \subset \mathcal{B}^{c}$ and $\mathcal{B} \in \mathcal{F}(\mathcal{I}_{fin})$, which implies $\mathcal{C} \in \mathcal{I}_{fin}$. So, $\mathcal{C}$ is a finite set. Consequently, $$\mathcal{B}=\{k_1<k_2<k_3< \cdots\} \setminus \mathcal{C}.$$
		
		Clearly, for any given $\epsilon>0$, 
		$$\left\{n \in \mathbb{N}:1-\epsilon <\frac{\lambda \left(A \cap \left[x-\frac{1}{s_n},x+\frac{1}{s_n}\right]\right)}{\frac{2}{s_n}}<1+\epsilon \right\} \supset \{k_1<k_2<k_3< \cdots\} \setminus \mathcal{C} \ \mbox{where} \ \mathcal{C} \  \mbox{is a finite set}.$$
		So there exists $q_0 \in \mathbb{N}$ such that 
		$$\left\{n \in \mathbb{N}:1-\epsilon <\frac{\lambda \left(A \cap \left[x-\frac{1}{s_n},x+\frac{1}{s_n}\right]\right)}{\frac{2}{s_n}}<1+\epsilon \right\} \supset \{k_{q_0}<k_{q_0+1}<k_{q_0+2}< \cdots\}.$$
		In particular if we choose $k_{m_0}=\max \{{k_{r_0},k_{q_0}}\}$, then $s_{k_{m_0}}<s_{k_{m_0+1}}<s_{k_{m_0+2}}< \cdots$ and we note that by \ref{consecutive}, $k_{m_0+(t+1)}=k_{m_0+t}+1$ for $t \in \mathbb{N} \cup \{0\}$.

		Consequently, for all $n \geq k_{m_0}$ the set $\left\{n \in \mathbb{N}:1-\epsilon <\frac{\lambda \left(A \cap \left[x-\frac{1}{s_n},x+\frac{1}{s_n}\right]\right)}{\frac{2}{s_n}}<1+\epsilon \right\}$ contains all consecutive natural numbers on and from $k_{m_0}$. i.e.
		$$1-\epsilon <\frac{\lambda \left(A \cap \left[x-\frac{1}{s_n},x+\frac{1}{s_n}\right]\right)}{\frac{2}{s_n}}<1+\epsilon \ \forall n \geq k_{m_0}.$$
		Thus, $\lim_{n \rightarrow{\infty}} \frac{\lambda \left(A \cap \left[x-\frac{1}{s_n},x+\frac{1}{s_n}\right]\right)}{\frac{2}{s_n}}=1$. Now, by Corollary \ref{e8} we can conclude that $x$ is $\langle s \rangle -$density point of $A$. As a result our definition of $\mathcal{I}_{(s)}$-density coincides with the definition of $\langle s \rangle$-density when $\mathcal{I}= \mathcal{I}_{fin}$.
	\end{proof}

	In the next example we investigate the role played by sequences in Definition \ref{e6}. We define the set $-A=\{-x: x \in A\}.$
	
	\begin{xmpl}\label{e9}
		This example gives some insight to the role a sequence plays in the above case.
	\end{xmpl}
	
	For the ideal $\mathcal{J}$ as in Example \ref{e7} if we make a choice of the sequence $\{s_n\}_{n \in \mathbb{N}}$ as following:
	
	\begin{equation*}
		s_n =
		\left\{
		\begin{array}{ll}
			n!  & \mbox{if } n \neq m^2 \ \mbox{where} \ m \in \mathbb{N}\\
			n^{-1} & \mbox{if } n=m^2 \ \mbox{where} \ m \in \mathbb{N}.
		\end{array}
		\right.
	\end{equation*}
	
	Then clearly from Example \ref{e7} $\{s_n\} \in \Sigma_{\mathcal{J}}$ and it is denoted by $(s)$. Now let us take a set 
	
	\begin{equation*}
		A=\bigcup_{n=1}^{\infty}\left[\frac{1}{(n+1)!},\frac{1}{n!\sqrt{n+1}}\right].
	\end{equation*}
	Then $A$ is Lebesgue measurable as it is countable union of closed intervals. Now since $A$ is a subset of $[0,1]$ so $\lambda(A)=\omega \leq 1$ for some positive real number $\omega$. We define $x_n =s_n \lambda \left(A \cap \left[0,\frac{1}{s_n}\right]\right)$ for $n \in \mathbb{N}$.\\
	Now if $n \neq m^2$, then
	\begin{equation*}
		x_n= s_n \lambda \left(A \cap \left[0,\frac{1}{s_n}\right]\right) = n!\lambda 
		\left(A \cap \left[0,\frac{1}{n!}\right]\right) \leq \frac{n!}{n! \sqrt{n+1}}=\frac{1}{\sqrt{n+1}}.
	\end{equation*}
	If $n=m^2$, then
	
	\begin{equation*}
		x_n= s_n \lambda \left(A \cap \left[0,\frac{1}{s_n}\right]\right) = \frac{\lambda \left(A \cap \left[0,n \right] \right)}{n}=\frac{\lambda(A)}{n}=\frac{\omega}{n}\leq \frac{1}{n}.
	\end{equation*}
	
	Since $\{x_n\}$ is a sequence of non-negative real numbers, so $\lim_{n} x_n =0$. Hence, $\mathcal{J}-\lim_{n} x_n =0$. So, $0$ is a right $\mathcal{J}_{(s)}$-dispersion point of $A$ for $(s)$ in $\Sigma_{\mathcal{J}}$. Now if we take $-A=\bigcup_{n=1}^{\infty}\left[-\frac{1}{(n+1)!},-\frac{1}{n!\sqrt{n+1}}\right]$, then by similar calculation it can be shown that $0$ is a left $\mathcal{J}_{(s)}$-dispersion point of $-A$ for $(s) \in \Sigma_{\mathcal{J}}$. We observe that $-A \cup A$ is symmetric about origin. Clearly, $0$ is both right and left $\mathcal{J}_{(s)}$-dispersion point of $-A \cup A$ for $(s)$ in $\Sigma_{\mathcal{J}}$. Consequently, $0$ is a $\mathcal{J}_{(s)}$-dispersion point of $-A \cup A$ for $(s)$ in $\Sigma_{\mathcal{J}}$.\\
	
	Now, instead of taking the sequence $\{s_n\}_{n \in \N}$ we make some other choice of sequence $\{c_n\}_{n \in \N}$ where
	
	\begin{equation*}
		c_n =
		\left\{
		\begin{array}{ll}
			n! \sqrt{n+1}  & \mbox{if } n \neq m^2 \ \mbox{where} \ m \in \mathbb{N}\\
			n^{-1} & \mbox{if } n=m^2 \ \mbox{where} \ m \in \mathbb{N}.
		\end{array}
		\right.
	\end{equation*}
	
	Then, $c_2<c_3<c_5<c_6<c_7< \dots$ and so the set of positive integers $\{2,3,5,6,7,8,10,\dots\}=\{n \in \mathbb{N}:n \neq m^2 \ \mbox{where} \ m \in \mathbb{N}\} \in \mathcal{F}(\mathcal{J})$ i.e. $\{c_n\}_{n \in \mathbb{N}}$ is $\mathcal{J}$-monotonic increasing and unbounded positive real sequence. So, $\{c_n\} \in \Sigma_{\mathcal{J}}$ and we denote $\{c_n\}$ as $(c)$. We define $y_n =c_n \lambda \left(A \cap \left[0,\frac{1}{c_n}\right]\right)$ for all $n \in \mathbb{N}$. Thus, for $n \neq m^2$,
	
	\begin{equation*}
		\begin{split}
			y_n &= n!\sqrt{n+1} \ \lambda\left(A \cap \left[0,\frac{1}{n!\sqrt{n+1}}\right]\right)\\
			&= n!\sqrt{n+1} \sum_{k=n}^{\infty} \left( \frac{1}{k!\sqrt{k+1}}-\frac{1}{(k+1)!}\right)\\
			&= n!\sqrt{n+1} \lim_{p \rightarrow{\infty}} \sum_{k=n}^{n+p} \left( \frac{1}{k!\sqrt{k+1}}-\frac{1}{(k+1)!}\right)\\
			&= \lim_{p \rightarrow{\infty}} n!\sqrt{n+1}  \sum_{k=n}^{n+p} \left( \frac{1}{k!\sqrt{k+1}}-\frac{1}{(k+1)!}\right)\\
			&= \lim_{p \rightarrow{\infty}} S_p
		\end{split}
	\end{equation*}
	
	where
	\begin{equation*}
		\begin{split}
			S_p &= n!\sqrt{n+1} \   \sum_{k=n}^{n+p} \left( \frac{1}{k!\sqrt{k+1}}-\frac{1}{(k+1)!}\right)\\
			&= \left(1-\frac{1}{\sqrt{n+1}}\right)+\left(\frac{1}{\sqrt{n+1}\sqrt{n+2}}-\frac{1}{(n+2)\sqrt{n+1}}\right)+ \left(\frac{1}{(n+2) \sqrt{n+1}\sqrt{n+3}} \right.\\
			&\left. -\frac{1}{(n+3)(n+2) \sqrt{n+1}} \right)+\dots + \left(\frac{1}{(n+p)(n+p-1)\dots (n+2)\sqrt{n+1}\sqrt{n+p+1}} \right.\\
			&\left. -\frac{1}{(n+p+1)(n+p) \dots (n+2) \sqrt{n+1}}\right)\\
			&\geq \left(1-\frac{1}{\sqrt{n+1}}\right)+\left(\frac{1}{\sqrt{n+1}\sqrt{n+2}}-\frac{1}{\sqrt{n+2}\sqrt{n+1}}\right)
			+ \left(\frac{1}{(n+2) \sqrt{n+1}\sqrt{n+3}} \right.\\
			&\left. -\frac{1}{(n+2) \sqrt{n+1} \sqrt{n+3}}\right)+\dots + \left(\frac{1}{(n+p)(n+p-1)\dots (n+2)\sqrt{n+1}\sqrt{n+p+1}} \right. \\
			&\left. -\frac{1}{(n+p) \dots (n+2) \sqrt{n+1}\sqrt{n+p+1}}\right)\\
			&= 1- \frac{1}{\sqrt{n+1}}.
		\end{split}
	\end{equation*}

	Note that the final term in the R.H.S. is independent of $p$. So,
	\begin{equation*}
		y_n \geq 1-\frac{1}{\sqrt{n+1}} \  \text{for all} \ n\neq m^2 \ \mbox{where} \ m \in \mathbb{N}.
	\end{equation*}
	
	For, $n=m^2 \ \mbox{where} \ m \in \mathbb{N}$, 
	\begin{equation*}
		y_n= c_n \lambda \left(A \cap \left[0,\frac{1}{c_n}\right]\right) = \frac{\lambda \left(A \cap \left[0,n \right] \right)}{n}=\frac{\lambda(A)}{n}=\frac{\omega}{n}\leq \frac{1}{n}.
	\end{equation*}
	
	So, clearly, 
	\begin{equation*}
		1-\frac{1}{\sqrt{n+1}} \leq y_n \leq 1+\frac{1}{\sqrt{n+1}} \  \text{for all} \ n\neq m^2 \ \mbox{where} \ m \in \mathbb{N}.
	\end{equation*}
	
	Thus, $\left\{n: |y_n-1|\leq \frac{1}{\sqrt{n+1}}\right\} \supseteq \{n: n \neq m^2 \ \mbox{where} \ m \in \mathbb{N}\}$. For arbitrary small $\epsilon>0$, choose $n$ large enough, say there exists $n_0 \in \mathbb{N}$ so that for $n > n_0$ we have $\frac{1}{\sqrt{n+1}}< \epsilon$. Then, $$\left\{n: |y_n-1|< \epsilon\right\} \supseteq \{n: n \neq m^2 \ \mbox{where} \ m \in \mathbb{N}\} \setminus \{1,2, \cdots, n_0\}.$$ 
	Since, $\{n: n \neq m^2 \ \mbox{where} \ m \in \mathbb{N}\} \setminus \{1,2, \cdots, n_0\} \in \mathcal{F}(\mathcal{J})$ so, $\left\{n: |y_n-1|< \epsilon\right\} \in \mathcal{F}(\mathcal{J})$ and thus $\mathcal{J}-\lim_{n \rightarrow{\infty}} y_n=1$. Consequently, $0$ is right $\mathcal{J}_{(c)}$-density point of $A$. By similar calculation it can be shown that $0$ is a left $\mathcal{J}_{(c)}$-density point of $-A$ for $(c)$ in $\Sigma_{\mathcal{J}}$. Clearly, $0$ is both right and left $\mathcal{J}_{(c)}$-density point of $-A \cup A$ for $(c)$ in $\Sigma_{\mathcal{J}}$. Consequently, $0$ is a $\mathcal{J}_{(c)}$-density point of $-A \cup A$ for $(c)$ in $\Sigma_{\mathcal{J}}$. Although $0$ is $\mathcal{J}_{(s)}$-dispersion point of $-A \cup A$ for $(s)$ in $\Sigma_{\mathcal{J}}$.

	\begin{rmrk}
		Thus in general, for any given set $A \subset \mathbb{R}$, the notion of $\mathcal{I}_{(s)}$-density point of $A$ with respect to the sequence $(s) \in \Sigma_{\mathcal{I}}$ is dependent on the nature of the sequence $(s)$. It may vary from sequence to sequence.
	\end{rmrk} 
	
	\section{$\mathcal{I}_{(s)}$-density topology}\label{section 3}

	For any real sequence $(s) \in \Sigma_{\mathcal{I}}$ and any set $A \in \mathcal{L}$ let us consider the collection
	
	\begin{equation*}
		\Phi^{\mathcal{I}}_{(s)}(A)=\{x \in \mathbb{R}: x\  \mbox{is} \     \mathcal{I}_{(s)}- \mbox{density point of A} \}.
	\end{equation*}
	
	\begin{note}\label{note}
		If $\mathcal{I}=\mathcal{I}_{fin}$, then by Theorem \ref{fin} 
		\begin{align*}
			\Phi^{\mathcal{I}_{fin}}_{(s)}(A) &= \{x \in \mathbb{R}: x\  \mbox{is} \     \mathcal{I}_{fin(s)}- \mbox{density point of A} \}\\
			&= \{x \in \mathbb{R}: x\  \mbox{is} \     \langle s \rangle- \mbox{density point of A} \}\\
			&= \Phi_{\langle s \rangle}(A)
		\end{align*}
	\end{note}
	
	\begin{dfn} \cite{dasgupta}
		We recall that countable union of closed sets is called $F_{\sigma}$ sets. Countable intersection of $F_{\sigma}$ sets is called $F_{\sigma \delta}$. Thus, $F_{\sigma \delta}:= (F_{\sigma})_{\delta}$.
	\end{dfn}
	
	\begin{lma} \label{e10} If $H$ and $G$ are any two Lebesgue measurable sets then $|\lambda(H)-\lambda(G)| \leq \lambda(H \triangle G)$.
		
	\end{lma}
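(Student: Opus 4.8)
The plan is to deduce the inequality from monotonicity and (sub)additivity of $\lambda$ after a single set-theoretic observation, namely that $H \subseteq G \cup (H \triangle G)$ and, symmetrically, $G \subseteq H \cup (H \triangle G)$. Indeed $H \setminus G \subseteq H \triangle G$, so every point of $H$ lies either in $G$ or in $H \triangle G$, which gives the first inclusion, and the second follows by interchanging the roles of $H$ and $G$.

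From $H \subseteq G \cup (H \triangle G)$, monotonicity and finite subadditivity of $\lambda$ yield $\lambda(H) \le \lambda(G) + \lambda(H \triangle G)$, and the symmetric inclusion gives $\lambda(G) \le \lambda(H) + \lambda(H \triangle G)$. If $\lambda(H \triangle G) = \infty$ there is nothing to prove; otherwise, when $\lambda(G)$ (respectively $\lambda(H)$) is finite one rearranges to $\lambda(H) - \lambda(G) \le \lambda(H \triangle G)$ (respectively $\lambda(G) - \lambda(H) \le \lambda(H \triangle G)$), and combining the two estimates gives $|\lambda(H) - \lambda(G)| \le \lambda(H \triangle G)$. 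Alternatively, and avoiding any extended-arithmetic caveat, one may decompose $H = (H\cap G)\cup(H\setminus G)$ and $G = (H\cap G)\cup(G\setminus H)$ as disjoint unions, together with the disjoint union $H \triangle G = (H\setminus G)\cup(G\setminus H)$; finite additivity then gives $\lambda(H) - \lambda(G) = \lambda(H\setminus G) - \lambda(G\setminus H)$ whenever $\lambda(H\cap G) < \infty$, and the elementary fact that $|a - b| \le a + b$ for $a, b \ge 0$ finishes the estimate since $\lambda(H\setminus G) + \lambda(G\setminus H) = \lambda(H \triangle G)$.

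There is no genuine obstacle here: the only point requiring a word of care is the case of infinite measures, which is dispatched as indicated above (and in any event the lemma is only ever invoked for sets of finite measure in what follows). I would present the final proof in three short lines: record the two inclusions $H \subseteq G \cup (H\triangle G)$ and $G \subseteq H \cup (H\triangle G)$, apply monotonicity and subadditivity, and combine the resulting pair of inequalities.
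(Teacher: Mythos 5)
Your proposal is correct and, in its second variant (decomposing $H=(H\cap G)\cup(H\setminus G)$ and $G=(H\cap G)\cup(G\setminus H)$ and cancelling the common term), is essentially the paper's own argument; the first variant via the inclusions $H\subseteq G\cup(H\triangle G)$ is only a cosmetic rearrangement of the same idea. If anything you are more careful than the paper, which cancels $\lambda(H\cap G)$ without noting that this step needs $\lambda(H\cap G)<\infty$.
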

	
	\begin{proof}
		If $H \cap G =\emptyset$, then $H \setminus G =H$ and $G \setminus H =G$. So, 
		\begin{align*}
			|\lambda(H)-\lambda(G)| &\leq |\lambda(H)|+ |\lambda(G)|\\
			&= \lambda(H)+ \lambda(G)\\
			&= \lambda(H \setminus G)+ \lambda(G \setminus H)\\
			&= \lambda((H \setminus G) \cup (G \setminus H)) \ \mbox{since} \ \lambda \ \mbox{is countably additive}\\
			&= \lambda(H \triangle G).
		\end{align*}
		If $H \cap G  \neq \emptyset$, then $H=(H \setminus G) \cup (H \cap G)$ and $G=(G \setminus H) \cup (G \cap H)$. So,
		\begin{align*}
			|\lambda(H)-\lambda(G)| &=|\lambda(H \setminus G)+\lambda(H \cap G)-\lambda(G \setminus H)-\lambda(G \cap H)|\\
			&=|\lambda(H \setminus G)-\lambda(G \setminus H)|\\
			&\leq |\lambda(H \setminus G)|+ |\lambda(G \setminus H)|\\
			&= \lambda(H \setminus G)+ \lambda(G \setminus H)\\
			&= \lambda((H \setminus G) \cup (G \setminus H)) \ \mbox{since} \ \lambda \ \mbox{is countably additive}\\
			&= \lambda(H \triangle G).
		\end{align*}
		Thus in the above two cases the result holds good. This completes the proof.
	\end{proof}

	\begin{ppsn}
		For any Lebesgue measurable set $A \in \mathcal{L}$ and a sequence $(s) \in \Sigma_{\mathcal{I}}$ the set $\Phi^{\mathcal{I}}_{(s)}(A)$ is a $F_{\sigma \delta}$ set.
	\end{ppsn}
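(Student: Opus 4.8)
The plan is to exhibit $\Phi^{\mathcal{I}}_{(s)}(A)$ as a countable intersection of $F_{\sigma}$ sets. Fix $(s)=\{s_n\}_{n\in\mathbb{N}}\in\Sigma_{\mathcal{I}}$ and, for each $n\in\mathbb{N}$, define $\phi_n\colon\mathbb{R}\to[0,1]$ by $\phi_n(x)=\frac{s_n}{2}\,\lambda\!\left(A\cap\left[x-\frac{1}{s_n},x+\frac{1}{s_n}\right]\right)$; by Definition \ref{e6}, $x\in\Phi^{\mathcal{I}}_{(s)}(A)$ if and only if $\mathcal{I}\text{-}\lim_n\phi_n(x)=1$. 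The first point to establish is that every $\phi_n$ is continuous. For $x,y\in\mathbb{R}$ the symmetric difference of the measurable sets $A\cap\left[x-\frac{1}{s_n},x+\frac{1}{s_n}\right]$ and $A\cap\left[y-\frac{1}{s_n},y+\frac{1}{s_n}\right]$ is contained in $\left[x-\frac{1}{s_n},x+\frac{1}{s_n}\right]\triangle\left[y-\frac{1}{s_n},y+\frac{1}{s_n}\right]$, a set of Lebesgue measure at most $2|x-y|$; so Lemma \ref{e10} gives $|\phi_n(x)-\phi_n(y)|\le s_n|x-y|$. Hence, for every $c\in\mathbb{R}$, $\{x:\phi_n(x)\ge c\}$ is closed and $\{x:\phi_n(x)>c\}$ is open.

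Next I would reduce the $\mathcal{I}$-limit condition to countably many conditions. Because $0\le\phi_n(x)\le 1$, we have $\mathcal{I}\text{-}\lim_n\phi_n(x)=1$ precisely when $\{n\in\mathbb{N}:\phi_n(x)\le 1-\frac1j\}\in\mathcal{I}$ for every $j\in\mathbb{N}$, equivalently $\{n\in\mathbb{N}:\phi_n(x)>1-\frac1j\}\in\mathcal{F}(\mathcal{I})$ for every $j$. Therefore
\[
\Phi^{\mathcal{I}}_{(s)}(A)=\bigcap_{j=1}^{\infty}E_j,\qquad E_j:=\Big\{x\in\mathbb{R}:\{n\in\mathbb{N}:\phi_n(x)>1-\tfrac1j\}\in\mathcal{F}(\mathcal{I})\Big\},
\]
and the whole matter comes down to proving that each $E_j$ is an $F_{\sigma}$ set.

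To attack this, put $U^{(j)}_n:=\{x:\phi_n(x)>1-\frac1j\}$, open by the previous step. Since a filter is upward closed, $x\in E_j$ iff the index set $\{n:x\in U^{(j)}_n\}$ belongs to $\mathcal{F}(\mathcal{I})$ iff some $M\in\mathcal{F}(\mathcal{I})$ satisfies $x\in\bigcap_{n\in M}U^{(j)}_n$; that is,
\[
E_j=\bigcup_{M\in\mathcal{F}(\mathcal{I})}\ \bigcap_{n\in M}U^{(j)}_n ,
\]
each inner set being $G_{\delta}$. The remaining, and essentially the only, task is to collapse this a priori uncountable union to an $F_{\sigma}$ set. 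I would try to adapt the argument that settles the analogous fact for $\langle s\rangle$-density (ordinary limits), where one uses that $g_N:=\inf_{n\ge N}\phi_n$ is upper semicontinuous and nondecreasing in $N$, so that $\{x:\lim_n\phi_n(x)=1\}=\bigcap_j\bigcup_N\{x:g_N(x)>1-\frac1j\}$ is a countable intersection of $F_{\sigma}$ sets; one wants an ``$\mathcal{I}$-liminf'' substitute for $g_N$, together with the facts that $\mathcal{F}(\mathcal{I})$ is closed under finite intersections (so the family $\bigcap_{n\in M}U^{(j)}_n$ is downward directed in $M$) and that $\mathbb{R}$ is second countable, in order to pare $\mathcal{F}(\mathcal{I})$ down to a countable subfamily. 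Once each $E_j$ is $F_{\sigma}$, $\Phi^{\mathcal{I}}_{(s)}(A)=\bigcap_j E_j$ is $F_{\sigma\delta}$ and in particular Borel, hence Lebesgue measurable, as the statement presupposes.

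The expected main obstacle is exactly this last reduction. Membership in the filter $\mathcal{F}(\mathcal{I})$ is, a priori, an uncountable quantifier, and an arbitrary union of $G_{\delta}$ (or even closed) subsets of $\mathbb{R}$ need not be Borel, much less $F_{\sigma}$; so replacing $\mathcal{F}(\mathcal{I})$ by a countable index set, uniformly in $x$, while keeping control of the Borel class, is where all the work lies. By contrast, the Lipschitz estimate coming from Lemma \ref{e10} and the elementary manipulations with $\mathcal{I}$-limits in the first two steps are routine.
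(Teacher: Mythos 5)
Your proposal is incomplete: it stops exactly at the step that carries all the content. The Lipschitz estimate via Lemma \ref{e10} and the reduction of the $\mathcal{I}$-limit condition to countably many conditions indexed by $j$ are correct and coincide with the first half of the paper's argument. But after writing $E_j=\bigcup_{M\in\mathcal{F}(\mathcal{I})}\bigcap_{n\in M}U^{(j)}_n$ you acknowledge that you do not know how to replace the uncountable union over $\mathcal{F}(\mathcal{I})$ by a countable one, and you offer no mechanism for doing so. Since an arbitrary union of $G_{\delta}$ (or even closed) sets need not be Borel, nothing you wrote yields that $E_j$ is $F_{\sigma}$; a correctly identified obstacle is not a proof of the statement.

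For comparison, the paper dispatches this obstacle by fixing a single set $F_k=\{k_1<k_2<\dots\}\in\mathcal{F}(\mathcal{I})$ along which $(s)$ is non-decreasing and asserting that $p\in\Phi^{\mathcal{I}}_{(s)}(A)$ if and only if for each $r$ the inequality $\frac{s_n}{2}G(p,n)\ge 1-\frac1r$ holds for all sufficiently large $n\in F_k$; this turns the filter quantifier into a single $\bigcup_{m\in\mathbb{N}}\bigcap_{n>m,\,n\in F_k}$ over closed sets and immediately gives $F_{\sigma\delta}$. You should note, however, that this is precisely the identification of $\mathcal{I}$-convergence with ordinary convergence along a prescribed filter set (an $\mathcal{I}$-versus-$\mathcal{I}^{*}$ type claim): $\mathcal{I}$-$\lim_n x_n=1$ only guarantees that $\{n:x_n\ge 1-\frac1r\}\cap F_k\in\mathcal{F}(\mathcal{I})$, not that its complement within $F_k$ is finite (consider $\mathcal{I}=\mathcal{J}$, $F_k=\mathbb{N}$ and a sequence equal to $0$ on the squares and $1$ elsewhere). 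So the only piece your argument is missing is exactly the piece the paper obtains from an equivalence that itself requires justification; to complete the proof you must either establish that equivalence for the particular sequences $\phi_n$ at hand or find a genuinely different way to tame the union over $\mathcal{F}(\mathcal{I})$.
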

	
	\begin{proof}
		For $A \in \mathcal{L}$ and $(s) \in \Sigma_{\mathcal{I}}$, let us consider the function $G(p,n): \mathbb{R} \times \N \rightarrow{\mathbb{R}}$ defined as $$G(p,n)=\lambda \left(A \cap \left[p-\frac{1}{s_n},p+\frac{1}{s_n}\right]\right)$$
		Now for $p,q \in \mathbb{R}$ and fixed $n \in \N$ we get by Lemma \ref{e10},
		\begin{equation*}
			\begin{split}
				|G(p,n)-G(q,n)| &= \left|\lambda \left(A \cap \left[p-\frac{1}{s_n},p+\frac{1}{s_n}\right]\right)-\lambda \left(A \cap \left[q-\frac{1}{s_n},q+\frac{1}{s_n}\right]\right)\right|\\
				&\leq \lambda \left(\left(A \cap \left[p-\frac{1}{s_n},p+\frac{1}{s_n}\right]\right) \triangle \left(A \cap \left[q-\frac{1}{s_n},q+\frac{1}{s_n}\right]\right)\right)\\
				&= \lambda \left(A \cap \left( \left[p-\frac{1}{s_n},p+\frac{1}{s_n}\right] \triangle  \left[q-\frac{1}{s_n},q+\frac{1}{s_n}\right]\right)\right)\\
				&\leq \left|\left[p-\frac{1}{s_n},p+\frac{1}{s_n}\right] \triangle  \left[q-\frac{1}{s_n},q+\frac{1}{s_n}\right]\right|\\
				&\leq 2|p-q|.
			\end{split}
		\end{equation*}
		Hence $G(.,n)$ for fixed $n$ satisfies Lipschitz condition. So it is continuous. So for fixed $n$ the function $\frac{s_n}{2} G(p,n)$ is continuous with respect to $p$. Now, $p \in \Phi^{\mathcal{I}}_{(s)}(A)$ if and only if for any $F_k=\{k_1<k_2< \dots\}\in \mathcal{F}(\mathcal{I})$ such that $s_{k_i} \leq s_{k_{i+1}} \ \forall i \in \mathbb{N}$ we have for each $r \in \mathbb{N}$ there exists $m \in \mathbb{N}$ such that for each $n>m$ and $n \in F_k$, 
		$$\frac{s_n}{2}   G(p,n) \geq 1-\frac{1}{r}.$$

		Hence, 
		$$\Phi^{\mathcal{I}}_{(s)}(A)= \bigcap_{r=1}^{\infty} \bigcup_{m \in \N} \bigcap _{n > m, n \in F_k} \left\{p \in \mathbb{R}: \frac{s_n}{2} G(p,n) \geq 1- \frac{1}{r} \right\}.$$ 
		Since, $\frac{s_n}{2} G(p,n)$ is continuous with respect to $p$ so $\left\{p \in \mathbb{R}: \frac{s_n}{2} G(p,n) \geq  1- \frac{1}{r}\right\}$ is a closed set. Therefore, $\Phi^{\mathcal{I}}_{(s)}(A) \in F_{\sigma \delta}$. In particular $\Phi^{\mathcal{I}}_{(s)}(A) \in \mathcal{L}$.
	\end{proof}
	
	\begin{ppsn}\label{e11}
		For every pair of Lebesgue measurable sets $A,B \in \mathcal{L}$ and a sequence $(s) \in \Sigma_{\mathcal{I}}$ we have
	\end{ppsn}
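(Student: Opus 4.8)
The list I expect here is the $\mathcal{I}_{(s)}$-analogue of the Filipczak--Hejduk properties of $\Phi_{\langle s\rangle}$, so the plan is to verify each item using only the elementary arithmetic of $\mathcal{I}$-limits together with Lemma \ref{e10}. Fix $p\in\mathbb{R}$, put $J_n=\left[p-\frac{1}{s_n},p+\frac{1}{s_n}\right]$ and $x_n(A)=\frac{\lambda(A\cap J_n)}{|J_n|}$, and recall the facts I will use repeatedly: the $\mathcal{I}$-limit, when it exists, is unique; it is additive and positively homogeneous on $\mathcal{I}$-convergent sequences; and it obeys the squeeze rule --- if $0\le a_n\le b_n\le 1$ for all $n$ and $\mathcal{I}\text{-}\lim a_n=1$, then $\mathcal{I}\text{-}\lim b_n=1$, since $\{n:a_n>1-\varepsilon\}\subseteq\{n:|b_n-1|<\varepsilon\}$.

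The items $\Phi^{\mathcal{I}}_{(s)}(\emptyset)=\emptyset$, $\Phi^{\mathcal{I}}_{(s)}(\mathbb{R})=\mathbb{R}$, and $A\sim B\Rightarrow\Phi^{\mathcal{I}}_{(s)}(A)=\Phi^{\mathcal{I}}_{(s)}(B)$ are routine: $x_n(\emptyset)\equiv 0$, $x_n(\mathbb{R})\equiv 1$, and if $A\sim B$ then $(A\cap J_n)\triangle(B\cap J_n)\subseteq A\triangle B$ is null, so Lemma \ref{e10} forces $x_n(A)=x_n(B)$ for every $n$ and every $p$. For the intersection formula I would start from $\lambda(A\cap J_n)+\lambda(B\cap J_n)=\lambda((A\cap B)\cap J_n)+\lambda((A\cup B)\cap J_n)$: if $p$ is an $\mathcal{I}_{(s)}$-density point of both $A$ and $B$, then $x_n(A)\le x_n(A\cup B)\le 1$ and the squeeze rule give $\mathcal{I}\text{-}\lim x_n(A\cup B)=1$, whence dividing the identity by $|J_n|$ and taking $\mathcal{I}$-limits (all four exist) yields $\mathcal{I}\text{-}\lim x_n(A\cap B)=1$; conversely $A\cap B\subseteq A$ and $A\cap B\subseteq B$ together with the squeeze rule show any $\mathcal{I}_{(s)}$-density point of $A\cap B$ lies in $\Phi^{\mathcal{I}}_{(s)}(A)\cap\Phi^{\mathcal{I}}_{(s)}(B)$. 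If the statement also records $\Phi(A)\subseteq\Phi^{\mathcal{I}}_{(s)}(A)$, I would pass to an $\mathcal{F}(\mathcal{I})$-set $F_k=\{k_1<k_2<\cdots\}$ along which $\{s_{k_i}\}$ is nondecreasing and unbounded, so that $x_{k_i}(A)\to 1$ whenever $x\in\Phi(A)$; then for each $\varepsilon>0$ the set $\{n:x_n(A)\le 1-\varepsilon\}$ meets $F_k$ in a finite set, hence is contained in $(\mathbb{N}\setminus F_k)$ together with a finite set, hence belongs to $\mathcal{I}$.

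The substantive item is the measure-theoretic one, $\lambda(\Phi^{\mathcal{I}}_{(s)}(A)\setminus A)=0$ (equivalently, with the reverse inclusion, $\Phi^{\mathcal{I}}_{(s)}(A)\sim A$). The plan is: if this fails, the measurable set $S=\Phi^{\mathcal{I}}_{(s)}(A)\cap(\mathbb{R}\setminus A)$ has positive measure, so by the Lebesgue Density Theorem (Theorem \ref{e5}) it contains a point $x$ that is a classical density point of $\mathbb{R}\setminus A$; I then seek a contradiction by examining the monotone filter-subsequence $\{s_{k_i}\}$. If $s_{k_i}\to\infty$, then $x_{k_i}(\mathbb{R}\setminus A)\to 1$ while $\mathcal{I}\text{-}\lim x_n(\mathbb{R}\setminus A)=0$ (because $x\in\Phi^{\mathcal{I}}_{(s)}(A)$), forcing a cofinite piece of $F_k$ into a set of $\mathcal{I}$ and hence $\mathbb{N}\in\mathcal{I}$, which is impossible for a nontrivial ideal. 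If instead $s_{k_i}\to L<\infty$, then by continuity of measure $x_n(A)$ converges along $F_k$ to $\tfrac{L}{2}\lambda\!\left(A\cap\left[x-\tfrac1L,x+\tfrac1L\right]\right)$, which must therefore equal the $\mathcal{I}$-limit $1$, forcing $\left[x-\tfrac1L,x+\tfrac1L\right]\subseteq A$ up to a null set and contradicting that $x$ is a density point of $\mathbb{R}\setminus A$. The reverse inclusion $\lambda(A\setminus\Phi^{\mathcal{I}}_{(s)}(A))=0$, if needed, follows symmetrically, or directly from $\Phi(A)\subseteq\Phi^{\mathcal{I}}_{(s)}(A)$ and Theorem \ref{e5}. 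I expect the main obstacle --- and the reason the ideal version is not a transcription of the $\langle s\rangle$ case --- to be exactly that the intervals $J_n$ need not shrink to $p$: one cannot pass from an ordinary subsequential limit to an $\mathcal{I}$-limit without first isolating, inside the $\mathcal{F}(\mathcal{I})$-set, the behaviour of $\{s_{k_i}\}$, and it is the nontriviality and admissibility of $\mathcal{I}$ (so that $\mathbb{N}\notin\mathcal{I}$ while every finite set lies in $\mathcal{I}$) that converts each subsequential fact into the required $\mathcal{I}$-statement.
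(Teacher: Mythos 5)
Your item list matches the paper's ($\Phi^{\mathcal{I}}_{(s)}(\emptyset)=\emptyset$ and $\Phi^{\mathcal{I}}_{(s)}(\mathbb{R})=\mathbb{R}$; the intersection formula; invariance under $\sim$; the chain $\Phi(A)\subseteq\Phi_{\langle s\rangle}(A)\subseteq\Phi^{\mathcal{I}}_{(s)}(A)$; and $\Phi^{\mathcal{I}}_{(s)}(A)\sim A$), and for the first three items your argument is essentially the paper's: its proof of the intersection formula uses $\lambda(A\cap J_n)+\lambda(B\cap J_n)-\lambda(A\cap B\cap J_n)\le|J_n|$, which is exactly your inclusion--exclusion identity with $\lambda((A\cup B)\cap J_n)$ replaced by its upper bound $|J_n|$. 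You genuinely diverge on the last two items. For the chain of inclusions the paper proves nothing new about classical density points: it cites Filipczak--Hejduk for $\Phi(A)\subseteq\Phi_{\langle s\rangle}(A)$ and then obtains $\Phi_{\langle s\rangle}(A)\subseteq\Phi^{\mathcal{I}}_{(s)}(A)$ purely from $\mathcal{I}_{fin}\subseteq\mathcal{I}$ (an ordinary limit is an $\mathcal{I}_{fin}$-limit, hence an $\mathcal{I}$-limit), whereas your direct argument along the monotone filter set proves only the composite inclusion and skips the intermediate operator $\Phi_{\langle s\rangle}$ that the statement actually asserts. For $\Phi^{\mathcal{I}}_{(s)}(A)\sim A$ the paper is far shorter than your contradiction argument: from items (1), (2), (4) it gets $\Phi^{\mathcal{I}}_{(s)}(A)\cap\Phi^{\mathcal{I}}_{(s)}(\mathbb{R}\setminus A)=\Phi^{\mathcal{I}}_{(s)}(\emptyset)=\emptyset$, hence $\Phi^{\mathcal{I}}_{(s)}(A)\setminus A\subseteq(\mathbb{R}\setminus A)\setminus\Phi(\mathbb{R}\setminus A)$, which is null by the Lebesgue Density Theorem; your route (locate a classical density point of $\mathbb{R}\setminus A$ inside the putatively non-null set and contradict the $\mathcal{I}$-limit) is correct but re-derives what the operator algebra gives for free. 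One caveat cuts both ways: in item (4) you assume the monotone subsequence $\{s_{k_i}\}$ indexed by the $\mathcal{F}(\mathcal{I})$-set is unbounded, which the definition of $\Sigma_{\mathcal{I}}$ does not guarantee (only the full sequence is assumed unbounded); the paper relies on the same tacit assumption via its citation, and you at least flag the issue explicitly and handle the bounded case in your treatment of item (5).
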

	\begin{enumerate}
		\item $\Phi^{\mathcal{I}}_{(s)}(\emptyset)=\emptyset$ , $\Phi^{\mathcal{I}}_{(s)}(\mathbb{R})=\mathbb{R}$;
		\item $\Phi^{\mathcal{I}}_{(s)}(A \cap B)=\Phi^{\mathcal{I}}_{(s)}(A) \cap \Phi^{\mathcal{I}}_{(s)}(B)$;
		\item $A \sim B \implies \Phi^{\mathcal{I}}_{(s)}(A)=\Phi^{\mathcal{I}}_{(s)}(B)$;
		\item $\Phi(A) \subseteq \Phi_{\langle s \rangle}(A) \subseteq \Phi^{\mathcal{I}}_{(s)}(A)$;
		\item $\Phi^{\mathcal{I}}_{(s)}(A) \sim A$.
		
	\end{enumerate}
	
	\begin{proof}
		\begin{enumerate}
			\item $\Phi^{\mathcal{I}}_{(s)}(\emptyset)=\emptyset$ by voidness since an empty set has no points so it has no $\mathcal{I}_{(s)}$-density points.

			Clearly, $\Phi^{\mathcal{I}}_{(s)}(\mathbb{R}) \subseteq \mathbb{R}$. Now, for any $x \in \mathbb{R}$ let $J_n=\left[x-\frac{1}{s_n},x+\frac{1}{s_n}\right]$ for all $n \in \mathbb{N}$. Then
			$$\frac{\lambda \left(\mathbb{R} \cap J_n\right)}{|J_n|}=\frac{\lambda (J_n)}{|J_n|}=\frac{|J_n|}{|J_n|}=1 \ \text{for all} \ n \in \N.$$
			So for given any $\epsilon>0$, $\left\{n \in \mathbb{N}: \left|\frac{\lambda \left(\mathbb{R} \cap J_n\right)}{|J_n|}-1\right|<\epsilon\right\}=\mathbb{N}\in \mathcal{F}(\mathcal{I})$. Thus $\mathcal{I}-\lim_{n \rightarrow{\infty}} \frac{\lambda \left(\mathbb{R} \cap J_n\right)}{|J_n|}=1$. So, $x \in \Phi^{\mathcal{I}}_{(s)}(\mathbb{R})$. Hence, $\Phi^{\mathcal{I}}_{(s)}(\mathbb{R})=\mathbb{R}$.
			
			\item Since $A \cap B \subseteq A$ and $A \cap B \subseteq B$, so $\Phi^{\mathcal{I}}_{(s)}(A \cap B) \subseteq \Phi^{\mathcal{I}}_{(s)}(A)$ and $\Phi^{\mathcal{I}}_{(s)}(A \cap B) \subseteq \Phi^{\mathcal{I}}_{(s)}(B)$. Consequently, $\Phi^{\mathcal{I}}_{(s)}(A \cap B) \subseteq \Phi^{\mathcal{I}}_{(s)}(A) \cap \Phi^{\mathcal{I}}_{(s)}(B)$. Now we are to prove $ \Phi^{\mathcal{I}}_{(s)}(A) \cap \Phi^{\mathcal{I}}_{(s)}(B) \subseteq \Phi^{\mathcal{I}}_{(s)}(A \cap B)$. Let $x \in \Phi^{\mathcal{I}}_{(s)}(A) \cap \Phi^{\mathcal{I}}_{(s)}(B)$. Thus $x \in \Phi^{\mathcal{I}}_{(s)}(A)$ and $x \in  \Phi^{\mathcal{I}}_{(s)}(B)$. For $J_n=\left[x-\frac{1}{s_n},x+\frac{1}{s_n}\right] \ \forall n \in \mathbb{N}$ and for given any $\epsilon >0$ we have 
			$$A_{\epsilon}=\left\{n:\frac{\lambda(A \cap  J_n)}{|J_n|}>1-\epsilon\right\} \in \mathcal{F}(\mathcal{I}) \ \mbox{and} \ B_{\epsilon}=\left\{n:\frac{\lambda(B \cap  J_n)}{|J_n|}>1-\epsilon\right\} \in \mathcal{F}(\mathcal{I}).$$
			Now since,      
			$$\lambda(A \cap J_n)+\lambda(B \cap J_n)-\lambda(A \cap B \cap J_n) \leq |J_n|$$
			
			so for any $\{k_1<k_2< \dots\}\in \mathcal{F}(\mathcal{I})$ such that $s_{k_i} \leq s_{k_{i+1}} \ \forall i \in \mathbb{N}$ we have for $n \in \{k_1<k_2< \dots\}$,
			\begin{equation}\label{eq1}
				\frac{\lambda(A \cap  J_n)}{|J_n|}+\frac{\lambda(B \cap  J_n)}{|J_n|} \leq 1+ \frac{\lambda((A \cap B)\cap J_n)}{|J_n|}.
			\end{equation}

			So for $n \in \{k_1<k_2< \dots\} \cap A_{\epsilon} \cap B_{\epsilon}$ from equation \ref{eq1} we have
			\begin{align*}
				\frac{\lambda((A \cap B)\cap J_n)}{|J_n|} & \geq \frac{\lambda(A \cap J_n)}{|J_n|}+ \frac{\lambda(B \cap J_n)}{|J_n|}-1\\
				&> 1-2\epsilon.
			\end{align*}
			Thus, $\left\{n:\frac{\lambda((A \cap B)\cap J_n)}{|J_n|}>1-2\epsilon\right\} \supseteq \{k_1<k_2< \dots\} \cap A_{\epsilon} \cap B_{\epsilon}$ and $\{k_1<k_2< \dots\} \cap A_{\epsilon} \cap B_{\epsilon} \in \mathcal{F}(\mathcal{I})$. So, $\mathcal{I}-\lim_n \frac{\lambda((A \cap B) \cap  J_n)}{|J_n|}= 1$. Therefore, $x \in \Phi^{\mathcal{I}}_{(s)}(A \cap B)$. So we are done. As a corollary to this we can conclude $\Phi^{\mathcal{I}}_{(s)}(A) \subseteq \Phi^{\mathcal{I}}_{(s)}(B)$ for $A \subseteq B$ i.e. $\Phi^{\mathcal{I}}_{(s)}(.)$ is monotonic. 
			
			\item Let $\{J_n\}_{n \in \mathbb{N}}$ be any sequence of closed interval in $\mathbb{R}$. If $\lambda(A \triangle B)=0$ then we claim that $\lambda(A \cap J_n)=\lambda(B \cap J_n)$ for each interval $J_n \subset \mathbb{R}$. Now
			\begin{equation*}
				\begin{split}
					A &=A \cap (B \cup B^{c})\\
					&= (A \cap B) \cup (A \cap B^{c})\\
					&= (A \cap B) \cup (A \setminus B)\\
					&\subset B \cup (A \triangle B).
				\end{split}
			\end{equation*}
			
			So, for any $n \in \mathbb{N}$ we have
			\begin{equation*}
				\begin{split}
					\lambda(A \cap J_n) & \leq \lambda ((B \cup (A \triangle B)) \cap J_n)\\
					& \leq \lambda (B \cap J_n)+\lambda ((A \triangle B) \cap J_n) \\
					&= \lambda(B \cap J_n) \quad \mbox{since} \ \lambda ((A \triangle B) \cap J_n) \leq \lambda(A \triangle B)=0.
				\end{split}
			\end{equation*}
			Similarly, $\lambda (B \cap J_n) \leq \lambda(A \cap J_n)$ for all $n \in \mathbb{N}$. So, we have $\lambda (A \cap J_n) = \lambda(B \cap J_n)$ for all $n \in\mathbb{N}$. For $(s) \in \Sigma_{\mathcal{I}}$ let $J_n=\left[x-\frac{1}{s_n},x+\frac{1}{s_n}\right]$ for all $n \in \mathbb{N}$. Then,
			
			\begin{align*}
				x \in \Phi^{\mathcal{I}}_{(s)}(A) &\Leftrightarrow \mathcal{I}-\lim_n \frac{\lambda(A \cap  J_n)}{|J_n|}= 1\\
				&\Leftrightarrow \mathcal{I}-\lim_n \frac{\lambda(B \cap  J_n)}{|J_n|}= 1\\
				&\Leftrightarrow x \in \Phi^{\mathcal{I}}_{(s)}(B)
			\end{align*}
			Consequently, $\Phi^{\mathcal{I}}_{(s)}(A)= \Phi^{\mathcal{I}}_{(s)}(B)$.
			
			\item By Proposition 2 from \cite{Filipczak 2004} we have $\Phi(A) \subseteq \Phi_{\langle s \rangle}(A)$. Now we claim that $\Phi_{\langle s \rangle}(A) \subseteq \Phi^{\mathcal{I}}_{(s)}(A)$.
			We notice that if $\mathcal{I}$ is an admissible ideal then $\mathcal{I}_{fin} \subset \mathcal{I}$. For any $x \in \mathbb{R}$ let $x \in \Phi_{\langle s \rangle}(A)$. Then by Note \ref{note}, $x \in \Phi^{\mathcal{I}_{fin}}_{(s)}(A)$. Thus for given any $\epsilon>0$, 
			$$\left\{n \in \mathbb{N}:\left|\frac{\lambda \left(A \cap \left[x-\frac{1}{s_n},x+\frac{1}{s_n}\right]\right)}{\frac{2}{s_n}}-1\right|\geq \epsilon \right\} \in \mathcal{I}_{fin}$$
			Thus,
			$$\left\{n \in \mathbb{N}:\left|\frac{\lambda \left(A \cap \left[x-\frac{1}{s_n},x+\frac{1}{s_n}\right]\right)}{\frac{2}{s_n}}-1\right|\geq \epsilon \right\} \in \mathcal{I} \ \mbox{since} \ \mathcal{I}_{fin} \subseteq 
			\mathcal{I}.$$
			So, $x \in \Phi^{\mathcal{I}}_{(s)}(A)$. Consequently, $\Phi_{\langle s \rangle}(A) \subset \Phi^{\mathcal{I}}_{(s)}(A)$. 
			
			\item We are to show $\lambda (\Phi^{\mathcal{I}}_{(s)}(A) \triangle A)=0$. Now, 
			$\Phi^{\mathcal{I}}_{(s)}(A) \triangle A = (A \setminus \Phi^{\mathcal{I}}_{(s)}(A)) \cup (\Phi^{\mathcal{I}}_{(s)}(A) \setminus A)$. Since $\Phi (A) \subseteq \Phi^{\mathcal{I}}_{(s)}(A)$ so $A \setminus \Phi^{\mathcal{I}}_{(s)}(A) \subseteq A \setminus \Phi (A)$. By Lebesgue density theorem \ref{e5}, $\lambda (A \setminus \Phi (A))=0$. So, $\lambda (A \setminus \Phi^{\mathcal{I}}_{(s)}(A))=0$. Now we are to show $\lambda (\Phi^{\mathcal{I}}_{(s)}(A) \setminus A)=0$. We note that $\Phi^{\mathcal{I}}_{(s)}(A) \cap \Phi^{\mathcal{I}}_{(s)}(\mathbb{R} \setminus A)=\Phi^{\mathcal{I}}_{(s)}(A \cap (\mathbb{R} \setminus A)) = \Phi^{\mathcal{I}}_{(s)}(\emptyset)=\emptyset $. Hence $\Phi^{\mathcal{I}}_{(s)}(A) \subseteq \mathbb{R} \setminus \Phi^{\mathcal{I}}_{(s)}(\mathbb{R} \setminus A)$. So, $$\Phi^{\mathcal{I}}_{(s)}(A) \setminus A \subseteq (\mathbb{R} \setminus A) \setminus \Phi^{\mathcal{I}}_{(s)}(\mathbb{R} \setminus A) \subseteq (\mathbb{R} \setminus A) \setminus \Phi(\mathbb{R} \setminus A).$$ 
			
			Since $\mathbb{R} \setminus A \in \mathcal{L}$, so by Lebesgue density theorem \ref{e5}, $\lambda ((\mathbb{R} \setminus A) \setminus \Phi(\mathbb{R} \setminus A)) =0$. Therefore, $\lambda (\Phi^{\mathcal{I}}_{(s)}(A) \setminus A)=0$ since $\lambda$ is complete measure. Hence, $\lambda (\Phi^{\mathcal{I}}_{(s)}(A) \triangle A)=0$. 
			
		\end{enumerate}
		
	\end{proof}
	
	\begin{crlre}\label{lower}
		The operator $\Phi^{\mathcal{I}}_{(s)}: \mathcal{L} \rightarrow{\mathcal{L}}$ is a lower density operator in the measure space $(\mathbb{R}, \mathcal{L}, \lambda)$.
	\end{crlre}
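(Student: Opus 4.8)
The plan is to read the corollary straight off Proposition~\ref{e11}, since the four defining clauses of a lower density operator are, one by one, clauses of that proposition. First I would record that $\Phi^{\mathcal{I}}_{(s)}$ is genuinely a map $\mathcal{L}\to\mathcal{L}$: this is precisely the content of the preceding proposition, where $\Phi^{\mathcal{I}}_{(s)}(A)$ was exhibited as an $F_{\sigma\delta}$ set (via the representation $\Phi^{\mathcal{I}}_{(s)}(A)=\bigcap_{r}\bigcup_{m}\bigcap_{n>m,\,n\in F_k}\{p\in\mathbb{R}:\frac{s_n}{2}G(p,n)\ge 1-\frac1r\}$ together with the Lipschitz continuity of $p\mapsto G(p,n)$), hence in particular $\Phi^{\mathcal{I}}_{(s)}(A)\in\mathcal{L}$. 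So the codomain requirement is satisfied.

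It then remains to match the axioms. Condition (1) of the definition of a lower density operator, namely $\Phi^{\mathcal{I}}_{(s)}(\emptyset)=\emptyset$ and $\Phi^{\mathcal{I}}_{(s)}(\mathbb{R})=\mathbb{R}$, is exactly part (1) of Proposition~\ref{e11}. Condition (2), $\Phi^{\mathcal{I}}_{(s)}(A\cap B)=\Phi^{\mathcal{I}}_{(s)}(A)\cap\Phi^{\mathcal{I}}_{(s)}(B)$, is part (2). Condition (3), $A\sim B\implies\Phi^{\mathcal{I}}_{(s)}(A)=\Phi^{\mathcal{I}}_{(s)}(B)$, is part (3). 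Finally condition (4), $A\sim\Phi^{\mathcal{I}}_{(s)}(A)$ (recall $A\sim B$ abbreviates $\lambda(A\triangle B)=0$), is part (5). Hence all conditions in the definition hold, and the corollary follows.

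The only substantive input here is the measurability of $\Phi^{\mathcal{I}}_{(s)}(A)$, which is why the $F_{\sigma\delta}$ representation had to be established first; once that is granted, the present statement is purely a matter of bookkeeping, so I expect no real obstacle in writing it out.
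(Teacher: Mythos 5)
Your proposal is correct and matches the paper's (implicit) argument exactly: the corollary is read off Proposition~\ref{e11}, with parts (1)--(3) and (5) supplying the four axioms of a lower density operator and the earlier $F_{\sigma\delta}$ proposition guaranteeing that $\Phi^{\mathcal{I}}_{(s)}(A)\in\mathcal{L}$. Nothing further is needed.
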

	
	\begin{dfn}\cite{halmos} Let $E$ be any subset of $\mathbb{R}$. Then a Lebesgue measurable set $\mathscr{G} \subseteq E$ is said to be a measurable kernel of $E$ if $\lambda ^{\star}(A)=0$, for every set $A \subseteq (E \setminus \mathscr{G})$.
		
	\end{dfn}
	
	As a consequence of Remark \ref{e1}, Theorem \ref{e2} and Corollary \ref{lower} we can have the following theorem.
	
	\begin{thm}
		For every sequence $(s) \in \Sigma_{\mathcal{I}}$ the family $\mathcal{T}^{\mathcal{I}}_{(s)}=\{A \in \mathcal{L}: A \subseteq \Phi^{\mathcal{I}}_{(s)}(A)\}$ forms a topology.  
	\end{thm}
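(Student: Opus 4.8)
The plan is to obtain this statement as a direct consequence of the structural results already established. By Corollary \ref{lower}, the operator $\Phi^{\mathcal{I}}_{(s)} : \mathcal{L} \to \mathcal{L}$ is a lower density operator on the measure space $(\mathbb{R}, \mathcal{L}, \lambda)$. By Remark \ref{e1}, every lower density operator is in particular an almost density operator: conditions (1)--(3) in the definition of an almost density operator are literally conditions (1)--(3) for a lower density operator, while condition (4), $\lambda(\Phi^{\mathcal{I}}_{(s)}(A) \setminus A) = 0$ for all $A \in \mathcal{L}$, is immediate from the sharper relation $\Phi^{\mathcal{I}}_{(s)}(A) \sim A$ recorded in Proposition \ref{e11}(5). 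Hence $\Psi := \Phi^{\mathcal{I}}_{(s)}$ is an almost density operator from $\mathcal{L}$ into $2^{\mathbb{R}}$.

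It then remains only to invoke Theorem \ref{e2} with this $\Psi$: that theorem asserts exactly that $\mathcal{T}_{\Psi} = \{B \in \mathcal{L} : B \subseteq \Psi(B)\}$ is a topology on $\mathbb{R}$, and by construction $\mathcal{T}^{\mathcal{I}}_{(s)} = \mathcal{T}_{\Psi}$ for this choice of $\Psi$. This completes the proof.

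If one wishes to avoid quoting Theorem \ref{e2} and verify the axioms directly, the only nontrivial point is closure under arbitrary unions, since $\mathcal{L}$ itself is not closed under uncountable unions. For a family $\{A_\alpha\} \subseteq \mathcal{T}^{\mathcal{I}}_{(s)}$ I would set $U = \bigcup_\alpha A_\alpha$, pick a measurable kernel $\mathscr{G}$ of $U$ (so $U \setminus \mathscr{G}$ is null, whence $U \in \mathcal{L}$, $U \sim \mathscr{G}$, and $A_\alpha \cup \mathscr{G} \sim \mathscr{G}$ for each $\alpha$), and then combine the monotonicity of $\Phi^{\mathcal{I}}_{(s)}$ with Proposition \ref{e11}(3) to get $A_\alpha \subseteq \Phi^{\mathcal{I}}_{(s)}(A_\alpha) \subseteq \Phi^{\mathcal{I}}_{(s)}(A_\alpha \cup \mathscr{G}) = \Phi^{\mathcal{I}}_{(s)}(\mathscr{G}) = \Phi^{\mathcal{I}}_{(s)}(U)$; taking the union over $\alpha$ yields $U \subseteq \Phi^{\mathcal{I}}_{(s)}(U)$, i.e.\ $U \in \mathcal{T}^{\mathcal{I}}_{(s)}$. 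The membership of $\emptyset$ and $\mathbb{R}$ and stability under finite intersections are handled by Proposition \ref{e11}(1) and (2). So there is no genuine obstacle: the substantive work was already carried out in Corollary \ref{lower} and Proposition \ref{e11}, and this measurable-kernel manoeuvre is precisely what is packaged inside Theorem \ref{e2}.
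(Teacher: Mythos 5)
Your proposal is correct and takes essentially the same route as the paper: the paper likewise observes that the theorem is a consequence of Remark \ref{e1}, Theorem \ref{e2} and Corollary \ref{lower}, and then gives, \enquote{for the sake of completeness,} exactly the direct verification you sketch. The only cosmetic difference is that in the arbitrary-union step the paper works with $\mathscr{G}\cap H_t\sim H_t$ where you use $A_\alpha\cup\mathscr{G}\sim\mathscr{G}$; both rest on the same ingredients (measurable kernel, Proposition \ref{e11}(3) and (5), and monotonicity of $\Phi^{\mathcal{I}}_{(s)}$).
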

	
	\begin{proof}
		For the sake of completeness we are giving a detailed proof here. Since, by Proposition \ref{e11} (1), $\Phi^{\mathcal{I}}_{(s)}(\emptyset)=\emptyset$ and $\Phi^{\mathcal{I}}_{(s)}(\mathbb{R})=\mathbb{R}$ and both $\emptyset$ and $\mathbb{R}$ are Lebesgue measurable, so $\mathcal{T}^{\mathcal{I}}_{(s)}$ contains $\emptyset$ and $\mathbb{R}$. Now let us take $A, B \in \mathcal{T}^{\mathcal{I}}_{(s)}$. Then $A \cap B \in \mathcal{L}$ since both $A$ and $B$ are Lebesgue measurable sets. Also, $A \cap B \subseteq A \subseteq \Phi^{\mathcal{I}}_{(s)}(A)$ and $A \cap B \subseteq B \subseteq \Phi^{\mathcal{I}}_{(s)}(B)$. As a consequence, by Proposition \ref{e11} (2) we have 
		$$A \cap B \subseteq \Phi^{\mathcal{I}}_{(s)}(A) \cap \Phi^{\mathcal{I}}_{(s)}(B)=\Phi^{\mathcal{I}}_{(s)}(A \cap B).$$
		Therefore, $A \cap B \in \mathcal{T}^{\mathcal{I}}_{(s)}$. So, $\mathcal{T}^{\mathcal{I}}_{(s)}$ is closed under finite intersection.
		
		Now, let us take any arbitrary collection of sets $\{H_t\}_{t \in \Gamma}$ in $\mathcal{T}^{\mathcal{I}}_{(s)}$, where $\Gamma$ is an arbitrary indexing set. We are to show $\bigcup_{t \in \Gamma}H_t \in \mathcal{T}^{\mathcal{I}}_{(s)}$. Let $\mathscr{G}$ be a measurable kernel of the set $\bigcup_{t \in \Gamma}H_t$. Then we claim $\mathscr{G} \cap H_t \sim H_t$ for every $t \in \Gamma$. Clearly, $\mathscr{G} \subseteq \bigcup_{t \in \Gamma}H_t$. Since, $H_t \setminus \mathscr{G} \subseteq \bigcup_{t \in \Gamma}H_t \setminus \mathscr{G}$ so, $\lambda(H_t \setminus \mathscr{G})=0$ for any $t \in \Gamma$. It can be easily verified that $H_t \setminus (\mathscr{G} \cap H_t)=H_t \setminus \mathscr{G}$ for every $t \in \Gamma$. Thus, $\lambda(H_t \setminus (\mathscr{G} \cap H_t))=0$ for every $t \in \Gamma$. Also since $\mathscr{G} \cap H_t \subseteq H_t$ so, $\lambda((\mathscr{G} \cap H_t)\setminus H_t)=0$ for every $t \in \Gamma$. Therefore, $\lambda(H_t \triangle (\mathscr{G} \cap H_t))=0$ and so by Proposition \ref{e11} (3), $\Phi^{\mathcal{I}}_{(s)}(H_t)=\Phi^{\mathcal{I}}_{(s)}(\mathscr{G} \cap H_t)$ for every $t \in \Gamma$. Thus we obtain that
		$$\mathscr{G} \subseteq \bigcup_{t \in \Gamma}H_t \subseteq \bigcup_{t \in \Gamma}\Phi^{\mathcal{I}}_{(s)}(H_t)=\bigcup_{t \in \Gamma}\Phi^{\mathcal{I}}_{(s)}(\mathscr{G} \cap H_t) \subseteq \Phi^{\mathcal{I}}_{(s)}(\mathscr{G}). $$
		Since, $\lambda$ is a complete measure and by Proposition \ref{e11} (5) $\lambda(\Phi^{\mathcal{I}}_{(s)}(\mathscr{G}) \setminus \mathscr{G})=0$, so $\bigcup_{t \in \Gamma} H_t \in \mathcal{L}$. Moreover, $$\bigcup_{t \in \Gamma}H_t \subseteq \Phi^{\mathcal{I}}_{(s)}(\mathscr{G}) \subseteq \Phi^{\mathcal{I}}_{(s)}\left(\bigcup_{t \in \Gamma}H_t\right) \ \mbox{by monotonicity of} \ \Phi^{\mathcal{I}}_{(s)}(.).$$ 
		Hence, $\bigcup_{t \in \Gamma}H_t \in \mathcal{T}^{\mathcal{I}}_{(s)}$. Consequently, $\mathcal{T}^{\mathcal{I}}_{(s)}$ is closed under arbitrary union. This completes the proof of the theorem.
	\end{proof}
	
	\begin{note}
		We call $\mathcal{T}^{\mathcal{I}}_{(s)}$ to be the $\mathcal{I}_{(s)}$-density topology on the space of reals and by Proposition \ref{e11} (4), since $\Phi(A) \subseteq \Phi_{\langle s \rangle}(A) \subseteq \Phi^{\mathcal{I}}_{(s)}(A)$ so we can conclude that $\mathcal{T}_{d} \subseteq \mathcal{T}_{\langle s \rangle} \subseteq \mathcal{T}^{\mathcal{I}}_{(s)}$.
	\end{note}

	\begin{rmrk}
		As we have introduced $\mathcal{I}_{(s)}$-density for $(s) \in \Sigma_{\mathcal{I}}$ and for $\mathcal{I}=\mathcal{I}_{fin}$, $\langle s \rangle$-density coincides with $\mathcal{I}_{(s)}$-density, so $\mathcal{T}_{\langle s \rangle} = \mathcal{T}^{\mathcal{I}}_{(s)}$ if $\mathcal{I}=\mathcal{I}_{fin}$.
	\end{rmrk}

	In the following theorem the natural properties of $\mathcal{T}^{\mathcal{I}}_{(s)}$-topologies are listed.
	
	\begin{thm}
		For any $(s)\in \Sigma_{\mathcal{I}}$ and $A \in \mathcal{T}^{\mathcal{I}}_{(s)}$ we have
	\end{thm}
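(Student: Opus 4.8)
The plan is to dispatch the listed properties one at a time, with the work in each case reducing to three ingredients already available: the membership $A\in\mathcal L$, the defining inclusion $A\subseteq\Phi^{\mathcal I}_{(s)}(A)$, and Proposition~\ref{e11}\,---\,in particular the chain $\Phi(A)\subseteq\Phi_{\langle s\rangle}(A)\subseteq\Phi^{\mathcal I}_{(s)}(A)$, the identities $\Phi^{\mathcal I}_{(s)}(A\cap B)=\Phi^{\mathcal I}_{(s)}(A)\cap\Phi^{\mathcal I}_{(s)}(B)$ and $\Phi^{\mathcal I}_{(s)}(A)\sim A$, and the fact that $\Phi^{\mathcal I}_{(s)}(A)$ is $F_{\sigma\delta}$\,---\,together with the Lebesgue Density Theorem~\ref{e5}.

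\emph{Positive measure and descriptive form.} To see that a nonempty $A\in\mathcal T^{\mathcal I}_{(s)}$ has positive Lebesgue measure, fix $p\in A$; then $p\in\Phi^{\mathcal I}_{(s)}(A)$, so with $J_n=\bigl[p-\tfrac1{s_n},p+\tfrac1{s_n}\bigr]$ the sequence $x_n=\lambda(A\cap J_n)/|J_n|$ is $\mathcal I$-convergent to $1$. Hence $\{n\in\mathbb N:x_n>\tfrac12\}\in\mathcal F(\mathcal I)$, which is in particular nonempty (the empty set is not a member of $\mathcal F(\mathcal I)$, as $\mathcal I$ is nontrivial); picking one such $n$ gives $\lambda(A)\ge\lambda(A\cap J_n)>\tfrac1{s_n}>0$. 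Contrapositively, every $\mathcal T^{\mathcal I}_{(s)}$-open set of measure zero is empty. For the descriptive statement\,---\,that $A$ coincides, up to a null set, with the $F_{\sigma\delta}$ set $\Phi^{\mathcal I}_{(s)}(A)$, equivalently with a Euclidean $F_\sigma$ subset of itself\,---\,combine $A\in\mathcal L$ with $A\sim\Phi^{\mathcal I}_{(s)}(A)$ and inner regularity of $\lambda$.

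\emph{Comparison with the natural topology and its consequences.} Every Euclidean-open $U\subseteq\mathbb R$ satisfies $U\subseteq\Phi(U)$ (each of its points is a classical density point), hence $U\subseteq\Phi^{\mathcal I}_{(s)}(U)$ by Proposition~\ref{e11}, so $U\in\mathcal T^{\mathcal I}_{(s)}$; thus $\mathcal T^{\mathcal I}_{(s)}$ refines the natural topology. Consequences: $\mathcal T^{\mathcal I}_{(s)}$ is $T_1$ and Hausdorff (singletons are complements of Euclidean-open sets), and every nonempty $A\in\mathcal T^{\mathcal I}_{(s)}$ is infinite. Facts such as being a Baire space, or the failure of second countability, separability and the Lindel\"of property, transfer from the corresponding facts about $\mathcal T_d\subseteq\mathcal T^{\mathcal I}_{(s)}$ and are proved by the same arguments\,---\,e.g. a countable $\mathcal T^{\mathcal I}_{(s)}$-dense set would be measurable with open null complement, contradicting the previous paragraph.

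\emph{Expected main obstacle.} I anticipate the genuinely non-formal item to be a compactness- or connectedness-type assertion (e.g. that a nonempty $A\in\mathcal T^{\mathcal I}_{(s)}$ is never $\mathcal T^{\mathcal I}_{(s)}$-compact, or that $(\mathbb R,\mathcal T^{\mathcal I}_{(s)})$ is connected). For noncompactness, use that singletons are closed with empty $\Phi^{\mathcal I}_{(s)}$: from an infinite subset of $A$ extract distinct $x_k$ converging in the Euclidean sense to some $x_0$, and cover by $\mathbb R\setminus\{x_0\}$ together with the sets $\mathbb R\setminus\{x_k:k\ge j\}$, $j\in\mathbb N$; any finite subcover omits infinitely many of the $x_k$, so no finite subcover exists. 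For connectedness, a $\mathcal T^{\mathcal I}_{(s)}$-clopen $A$ forces $\Phi^{\mathcal I}_{(s)}(A)=A$ and $\Phi^{\mathcal I}_{(s)}(\mathbb R\setminus A)=\mathbb R\setminus A$ (using $\Phi^{\mathcal I}_{(s)}(A)\cap\Phi^{\mathcal I}_{(s)}(\mathbb R\setminus A)=\Phi^{\mathcal I}_{(s)}(\emptyset)=\emptyset$ from Proposition~\ref{e11}); then $\Phi(A)\subseteq A$ and $\Phi(\mathbb R\setminus A)\subseteq\mathbb R\setminus A$ by Proposition~\ref{e11}, and with Theorem~\ref{e5} this reduces the claim to the connectedness of the classical density topology $\mathcal T_d$, after which the positive-measure item yields $A=\emptyset$ or $A=\mathbb R$. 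This reduction, rather than the operator bookkeeping of Proposition~\ref{e11}, is the delicate step.
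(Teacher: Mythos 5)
The enumerated list this theorem actually asserts is not the one you guessed. The paper's items are: (1) $A+x\in\mathcal{T}^{\mathcal{I}}_{(s)}$ for every $x\in\mathbb{R}$, and (2) $-A\in\mathcal{T}^{\mathcal{I}}_{(s)}$ --- that is, invariance of the topology under translations and under reflection through the origin. None of the material in your proposal (positive measure of nonempty open sets, the $F_{\sigma\delta}$ description, refinement of the Euclidean topology, Hausdorffness, Baire category, noncompactness, connectedness) engages either of these claims, so as a proof of the stated theorem the proposal has a complete gap: the two assertions that need proving are never addressed. Moreover, the ingredients you list are not sufficient to close it, since nothing in Proposition~\ref{e11} says that $\Phi^{\mathcal{I}}_{(s)}$ commutes with translations or with $E\mapsto -E$; that is precisely the content that has to be supplied.

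For the record, the paper's argument is short and rests on two standard invariance properties of Lebesgue measure. For (1), take $b\in A+x$; then $b-x\in A\subseteq\Phi^{\mathcal{I}}_{(s)}(A)$, so $\mathcal{I}-\lim_n \frac{s_n}{2}\,\lambda\left(A\cap\left[b-x-\frac{1}{s_n},\,b-x+\frac{1}{s_n}\right]\right)=1$. Translation invariance of $\lambda$ gives $\lambda\left(A\cap\left[b-x-\frac{1}{s_n},\,b-x+\frac{1}{s_n}\right]\right)=\lambda\left((A+x)\cap\left[b-\frac{1}{s_n},\,b+\frac{1}{s_n}\right]\right)$, whence $b\in\Phi^{\mathcal{I}}_{(s)}(A+x)$ and therefore $A+x\subseteq\Phi^{\mathcal{I}}_{(s)}(A+x)$; measurability of $A+x$ is again translation invariance, so $A+x\in\mathcal{T}^{\mathcal{I}}_{(s)}$. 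Item (2) is the same computation with the identity $\lambda(kE)=|k|\,\lambda(E)$ for $k=-1$ in place of translation invariance: $\lambda\left(A\cap\left[-x-\frac{1}{s_n},\,-x+\frac{1}{s_n}\right]\right)=\lambda\left((-A)\cap\left[x-\frac{1}{s_n},\,x+\frac{1}{s_n}\right]\right)$. If you want to salvage your draft, the properties you did prove (e.g.\ $\mathcal{T}_d\subseteq\mathcal{T}^{\mathcal{I}}_{(s)}$, positivity of measure of nonempty open sets) are true and some appear elsewhere in the paper, but they belong to a different statement.
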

	\begin{enumerate}
		\item $A+x \in \mathcal{T}^{\mathcal{I}}_{(s)} \quad \forall x \in \mathbb{R}$ where $A+x=\{a+x:a \in A\}$
		\item $-A \in \mathcal{T}^{\mathcal{I}}_{(s)}$ where $-A=\{-a:a \in A\}.$
	\end{enumerate}
	
	\begin{proof}
		\begin{enumerate}
			\item For any $(s)\in \Sigma_{\mathcal{I}}$ and $x \in \mathbb{R}$ let $A \in \mathcal{T}^{\mathcal{I}}_{(s)}$ i.e. $A \subseteq \Phi^{\mathcal{I}}_{(s)}(A)$. We are to show $A+x \subseteq \Phi^{\mathcal{I}}_{(s)}(A+x)$. For fixed $x \in \mathbb{R}$ let $b \in A+x$ which implies $b-x\in A$ and so $b-x \in \Phi^{\mathcal{I}}_{(s)}(A)$. Hence $$\mathcal{I}-\lim_{n \rightarrow{\infty}} \frac{\lambda \left(A \cap \left[b-x-
				\frac{1}{s_n},b-x+\frac{1}{s_n}\right]\right)}{\frac{2}{s_n}}=1.$$
			Now by part (c) of Theorem 2.20 \cite{Rudin} since Lebesgue measure is translation invariant so,
			
			\begin{align*}
				\lambda \left(A \cap \left[b-x-
				\frac{1}{s_n},b-x+\frac{1}{s_n}\right]\right) &=\lambda \left(x+\left(A \cap \left[b-x-
				\frac{1}{s_n},b-x+\frac{1}{s_n}\right]\right)\right)\\
				&= \lambda \left((A+x) \cap \left[b-
				\frac{1}{s_n},b+\frac{1}{s_n}\right]\right).
			\end{align*}

			Thus,
			$$\mathcal{I}-\lim_{n \rightarrow{\infty}} \frac{\lambda \left((A+x) \cap \left[b-
				\frac{1}{s_n},b+\frac{1}{s_n}\right]\right)}{\frac{2}{s_n}}=1.$$ Consequently, $b \in \Phi^{\mathcal{I}}_{(s)}(A+x)$ and so $A+x \subseteq \Phi^{\mathcal{I}}_{(s)}(A+x)$. So the result follows.
			
			\item Let $A \in \mathcal{T}^{\mathcal{I}}_{(s)}$. So, $A \subseteq \Phi^{\mathcal{I}}_{(s)}(A)$. We are to show that $-A \subseteq \Phi^{\mathcal{I}}_{(s)}(-A)$. Let $x \in -A$ so $-x \in A$. Thus $-x \in \Phi^{\mathcal{I}}_{(s)}(A)$. Hence $$\mathcal{I}-\lim_{n \rightarrow{\infty}} \frac{\lambda \left(A \cap \left[-x-
				\frac{1}{s_n},-x+\frac{1}{s_n}\right]\right)}{\frac{2}{s_n}}=1.$$
			Now by part (e) of Theorem 2.20 \cite{Rudin}, for any Lebesgue measurable subset $A$ of $\mathbb{R}$ and $k \in \mathbb{R}$, $\lambda(kA)=|k|\lambda(A)$. So,
			$$\lambda \left(A \cap \left[-x-
			\frac{1}{s_n},-x+\frac{1}{s_n}\right]\right)=\lambda \left((-A) \cap \left[x-
			\frac{1}{s_n},x+\frac{1}{s_n}\right]\right).$$
			
			Thus,    
			$$\mathcal{I}-\lim_{n \rightarrow{\infty}} \frac{\lambda \left((-A) \cap \left[x-
				\frac{1}{s_n},x+\frac{1}{s_n}\right]\right)}{\frac{2}{s_n}}=1.$$
			Consequently, $x \in \Phi^{\mathcal{I}}_{(s)}(-A)$. Therefore, $-A \subset \Phi^{\mathcal{I}}_{(s)}(-A)$. So, $-A \in \mathcal{T}^{\mathcal{I}}_{(s)}$.

		\end{enumerate}
	\end{proof}
	
	\textbf{Problem.} Is there any characterization of equality for $\mathcal{T}_d$ and $\mathcal{T}^{\mathcal{I}}_{(s)}$ as given in \cite{Filipczak 2004} for $\mathcal{T}_{d}$ and $\mathcal{T}_{\langle s \rangle}$?\\

	In the next theorem we formulate a weaker condition for the sequence $(s) \in \Sigma_{\mathcal{I}}$ so that the classical density topology coincides with $\mathcal{I}_{(s)}$-density topology.
	
	\begin{thm} \label{e12}
		
		Let $(s) \in \Sigma_{\mathcal{I}}$ be a real sequence. If for any $\{k_1<k_2< \dots <k_n< \dots \} \in \mathcal{F}(\mathcal{I})$ such that $s_{k_i} \leq s_{k_{i+1}} \ \forall i \in \mathbb{N}$, the condition 
		$\liminf \frac{s_{k_n}}{s_{k_{(n+1)}}} >0$ holds, then  $\mathcal{T}_{d} = \mathcal{T}_{(s)}^{\mathcal{I}}$. 
		
	\end{thm}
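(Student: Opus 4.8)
Since Proposition~\ref{e11} already gives $\Phi(A)\subseteq\Phi^{\mathcal{I}}_{(s)}(A)$ for every $A\in\mathcal L$, hence $\mathcal T_d\subseteq\mathcal T^{\mathcal I}_{(s)}$ for every $(s)\in\Sigma_{\mathcal I}$ with no restriction, the plan is to establish the reverse inclusion $\mathcal T^{\mathcal I}_{(s)}\subseteq\mathcal T_d$ under the stated hypothesis. Fix $A\in\mathcal T^{\mathcal I}_{(s)}$ and a point $x\in A$; it suffices to show $x\in\Phi(A)$, for then $A\subseteq\Phi(A)$ and, $A$ being measurable, $A\in\mathcal T_d$. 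Put $B=\mathbb R\setminus A$ and $g(h)=\lambda\big(B\cap[x-h,x+h]\big)$. By definition, $x\in\Phi^{\mathcal I}_{(s)}(A)$ means $\mathcal I-\lim_n\frac{g(1/s_n)}{2/s_n}=0$, and we must deduce that $x$ is a classical dispersion point of $B$, i.e.\ $\lim_{h\to0+}\frac{g(h)}{2h}=0$. The point of passing to $B$ is that $h\mapsto g(h)$ is non-decreasing, so the measure of $B$ in a short interval about $x$ is dominated by its measure in any longer one; this monotonicity is what drives the estimate below, exactly as in the non-ideal characterization of Filipczak and Hejduk \cite{Filipczak 2004}.

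Next I would run the sandwiching argument in the ideal setting. If $g$ vanishes on some interval $(0,\eta]$ then $x\in\Phi(A)$ trivially, so assume $g(h)>0$ for all $h>0$. Fix $\varepsilon>0$, set $M_\varepsilon=\big\{n:\frac{g(1/s_n)}{2/s_n}<\varepsilon\big\}\in\mathcal F(\mathcal I)$, and let $\{k_1<k_2<\cdots\}\in\mathcal F(\mathcal I)$ witness $(s)\in\Sigma_{\mathcal I}$, so $s_{k_i}\le s_{k_{i+1}}$ for all $i$. Then $F_\varepsilon:=\{k_1<k_2<\cdots\}\cap M_\varepsilon\in\mathcal F(\mathcal I)$ and $s$ is still non-decreasing along it, so $F_\varepsilon$ is itself an admissible witness for the hypothesis; one also checks (using $g>0$) that the $s$-values along $F_\varepsilon$ are unbounded, hence, being non-decreasing, tend to $\infty$. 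Enumerate $F_\varepsilon=\{m_1<m_2<\cdots\}$, so $s_{m_1}\le s_{m_2}\le\cdots\uparrow\infty$, and by the hypothesis $c_\varepsilon:=\liminf_i\frac{s_{m_i}}{s_{m_{i+1}}}>0$. Since $s_{m_i}\uparrow\infty$, the intervals $[1/s_{m_{i+1}},1/s_{m_i}]$ cover a right neighbourhood of $0$; given small $h$, pick $i$ with $1/s_{m_{i+1}}\le h\le1/s_{m_i}$. Then $[x-h,x+h]\subseteq[x-1/s_{m_i},x+1/s_{m_i}]$ gives $g(h)\le g(1/s_{m_i})$, while $2h\ge2/s_{m_{i+1}}$, so
\begin{equation*}
\frac{g(h)}{2h}\ \le\ \frac{g(1/s_{m_i})}{2/s_{m_{i+1}}}\ =\ \frac{g(1/s_{m_i})}{2/s_{m_i}}\cdot\frac{s_{m_{i+1}}}{s_{m_i}}\ <\ \varepsilon\cdot\frac{2}{c_\varepsilon}
\end{equation*}
once $h$ is small enough that $i$ is large, using $m_i\in M_\varepsilon$ and $\frac{s_{m_i}}{s_{m_{i+1}}}>c_\varepsilon/2$ eventually. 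Hence $\limsup_{h\to0+}\frac{g(h)}{2h}\le\frac{2\varepsilon}{c_\varepsilon}$.

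The delicate point---and what I expect to be the main obstacle---is the passage $\varepsilon\downarrow0$: the bound involves $c_\varepsilon$, which a priori depends on $\varepsilon$ (sparser witnesses can have a smaller $\liminf$ of consecutive ratios), so one cannot simply conclude $\limsup_{h\to0+}\frac{g(h)}{2h}=0$. This is precisely where the hypothesis is needed in its full strength, namely its quantification over \emph{all} admissible witnesses: the $\mathcal I$-limit along $\mathbb N$ must be converted into an honest limit along one suitably chosen witness. Concretely, I would replace the sets $M_\varepsilon$ by a single set $M=\big\{n:\frac{g(1/s_n)}{2/s_n}<\eta_n\big\}$ for a sequence $\eta_n\downarrow0$ chosen to decay slowly enough that $M\in\mathcal F(\mathcal I)$ --- feasible because each $\big\{n:\frac{g(1/s_n)}{2/s_n}\ge 1/k\big\}$ lies in $\mathcal I$ --- intersect $M$ with the monotone witness to get one admissible witness $F^*=\{m_1<m_2<\cdots\}$ with a single constant $c=\liminf_i\frac{s_{m_i}}{s_{m_{i+1}}}>0$ supplied by the hypothesis, and rerun the displayed estimate along $F^*$; now $m_i\in F^*$ forces $\frac{g(1/s_{m_i})}{2/s_{m_i}}<\eta_{m_i}\to0$, so the right-hand side tends to $0$ as $h\to0+$. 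This gives $\lim_{h\to0+}\frac{g(h)}{2h}=0$, i.e.\ $x\in\Phi(A)$; as $x\in A$ was arbitrary, $A\subseteq\Phi(A)$, so $A\in\mathcal T_d$, and together with $\mathcal T_d\subseteq\mathcal T^{\mathcal I}_{(s)}$ from Proposition~\ref{e11} we obtain $\mathcal T_d=\mathcal T^{\mathcal I}_{(s)}$.
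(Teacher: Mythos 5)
Your overall strategy is the same as the paper's: reduce the equality of topologies to showing that an $\mathcal{I}_{(s)}$-density point is a classical density point, pass to the complement $B=\mathbb{R}\setminus A$, and sandwich an arbitrary small radius $t$ between $\frac{1}{s_{m_{i+1}}}$ and $\frac{1}{s_{m_i}}$ for consecutive members of a suitable witness, using the hypothesis to bound the ratio $\frac{s_{m_{i+1}}}{s_{m_i}}$. Your fixed-$\varepsilon$ estimate is correct, and intersecting $M_\varepsilon$ with the monotone witness and then re-invoking the hypothesis for the resulting set $F_\varepsilon$ is a legitimate (indeed careful) use of the \enquote{for any witness} quantifier. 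You also correctly diagnose the delicate point: the constant $c_\varepsilon$ depends on $\varepsilon$, so the bound $\limsup_{h\to0+}\frac{g(h)}{2h}\le\frac{2\varepsilon}{c_\varepsilon}$ does not by itself yield the conclusion.

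The proposed repair, however, is where the argument fails. The existence of a single $M=\{n: \frac{s_n}{2}g(1/s_n)<\eta_n\}\in\mathcal{F}(\mathcal{I})$ with $\eta_n\downarrow0$ is precisely the assertion that the $\mathcal{I}$-convergent sequence $x_n=\frac{s_n}{2}g(1/s_n)$ converges to $0$ in the ordinary sense along some set of the associated filter, i.e.\ that $\mathcal{I}$-convergence implies $\mathcal{I}^{*}$-convergence. For a general nontrivial admissible ideal this is false; it holds exactly for ideals with the additive property (AP) (P-ideals), as discussed already in \cite{Kostyrko 2000}. Concretely, write $\mathbb{N}$ as a union of infinitely many pairwise disjoint infinite sets $D_j$ and let $\mathcal{I}$ consist of all sets contained in finitely many of the $D_j$; the sequence with $x_n=1/j$ for $n\in D_j$ is $\mathcal{I}$-convergent to $0$, yet every $M\in\mathcal{F}(\mathcal{I})$ contains all but finitely many $D_j$ entirely, so $x_n\not\to0$ along $M$. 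Your \enquote{decay slowly enough} diagonalization amounts to taking a countable union of the sets $\{n:x_n\ge1/k\}\in\mathcal{I}$, and ideals are not closed under countable unions. The paper sidesteps this entirely by never letting the ratio constant depend on $\varepsilon$: it fixes one subsequence of the witness realizing $\liminf\frac{s_{k_n}}{s_{k_{n+1}}}=\sigma>0$ \emph{before} $\varepsilon$ enters, and defines the good set $C_\varepsilon$ with the threshold $\frac{\varepsilon\sigma}{2}$ already discounted by $\sigma$, so the final bound is $\varepsilon$ itself and letting $\varepsilon\downarrow0$ is harmless. To salvage your version you would either have to restrict to P-ideals or restructure the last step so that a single, $\varepsilon$-independent ratio constant is used throughout.
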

	
	\begin{proof}
		It is sufficient to show that, for any $A \in \mathcal{L}$, $\Phi(A)=\Phi^{\mathcal{I}}_{(s)}(A)$, when $(s)$ satisfies the condition given in the statement. By Proposition \ref{e11} (4) we have $\Phi(A) \subseteq \Phi^{\mathcal{I}}_{(s)}(A)$. Now, we need to show $\Phi^{\mathcal{I}}_{(s)}(A) \subseteq \Phi(A)$ i.e. if $x \in \mathbb{R}$ is an  $\mathcal{I}_{(s)}$-density point of $A$ then $x$ is classical density point of $A$. Since, $\liminf \frac{s_{k_n}}{s_{k_{(n+1)}}} >0$ so there exists a subsequence of $\{s_{k_n}\}$ say $\{s_{k_{l_n}}\}$ such that $\lim_{n \rightarrow \infty} \frac{s_{k_{l_n}}}{s_{k_{l_{n+1}}}} = \sigma >0$. Thus there exists $n_0 \in \mathbb{N}$ such that for any $n \geq n_0$ we have, $$\frac{\sigma}{2} <  \frac{s_{k_{l_n}}}{s_{k_{l_{n+1}}}} < \frac{3 \sigma}{2}.$$
		
		Since $x$ is an $\mathcal{I}_{(s)}$-density point of $A$ so clearly, 
		$$\mathcal{I}-\lim_{n \rightarrow \infty} \frac{\lambda\left(A^c \cap \left[x-\frac{1}{s_n},x+\frac{1}{s_n}\right]\right)}{\frac{2}{s_n}}=0, \ \mbox{where} \  A^c \ \mbox{denotes} \ \mathbb{R} \setminus A.$$ Thus, for any given $\epsilon >0$ the set $$C_{\epsilon}=\left\{n \in \mathbb{N}: \frac{s_n}{2}\lambda \left(A^c \cap \left[x-\frac{1}{s_n},x+\frac{1}{s_n}\right]\right)<\frac{\epsilon \sigma}{2}\right\} \in \mathcal{F}(\mathcal{I}).$$
		Now, there exists $p_0 \in \mathbb{N}$ and $p_0 > n_0$ such that for some $p \in \mathbb{N}$ such that $k_{l_{p}} \in \{k_1<k_2< \dots <k_n< \dots \} \cap C_{\epsilon}$ and $p \geq p_0$ we have $$\frac{s_{k_{l_{p}}}}{2} \lambda \left(A^c \cap \left[x-\frac{1}{s_{k_{l_{p}}}},x+\frac{1}{s_{k_{l_{p}}}}\right] \right)< \frac{\epsilon \sigma}{2}.$$
		Fix $t \in \mathbb{R}$ such that $0<t<\frac{1}{s_{k_{l_{p_0}}}}$. So, there exists $p \geq p_0$ for which $k_{l_{p}} \in \{k_1<k_2< \dots <k_n< \dots \} \cap C_{\epsilon}$ such that $\frac{1}{s_{k_{l_{p+1}}}} \leq t < \frac{1}{s_{k_{l_{p}}}}$. Hence, we have 
		\begin{equation*}
			\begin{split}
				\frac{\lambda\left(A^c \cap \left[x-t,x+t\right]\right)}{2t}  &\leq \frac{\lambda\left(A^c \cap \left[x-\frac{1}{s_{k_{l_{p}}}},x+\frac{1}{s_{k_{l_{p}}}}\right]\right)}{\frac{2}{s_{k_{l_{p+1}}}}}\\
				&= \frac{\lambda\left(A^c \cap \left[x-\frac{1}{s_{k_{l_{p}}}},x+\frac{1}{s_{k_{l_{p}}}}\right]\right)}{\frac{2}{s_{k_{l_{p}}}}} \cdot \frac{s_{k_{l_{p+1}}}}{s_{k_{l_{p}}}}\\
				&< \frac{\epsilon \sigma}{2} \cdot \frac{2}{\sigma}= \epsilon.\\
			\end{split}
		\end{equation*}
		Therefore, $x$ is a classical density point of $A$. This completes the proof of the theorem.
	\end{proof}

	In view of Theorem \ref{e12} the following open question naturally arise.
	
	\textbf{Problem.} Does the converse of the above theorem hold?

	\section*{Acknowledgements}
	\textit{The second author is grateful to The Council of Scientific and Industrial Research (CSIR), Government of India, for his fellowship
		funding under CSIR-JRF schemes (SRF fellowship File no. 09/025(0277)/2019-EMR-I) during the tenure of preparation of this research paper. 
	}

\end{document}